\newtheorem{theorem}{Theorem}
\newtheorem{lemma}{Lemma}
\newtheorem{proposition}{Proposition}
\newtheorem{corollary}{Corollary}
\newtheorem{claim}{\it Claim}
\theoremstyle{definition}
\newtheorem{definition}{\sc Definition}
\newtheorem*{notation}{\sc Notation}
\newtheorem*{remarks}{\sc Remarks}
\newtheorem*{remark*}{\sc Remark}
\newtheorem*{example*}{\sc Example}
\newcommand{\Real}{{\rm Re}\,}
\newcommand{\clos}{{\rm clos}}
\newcommand{\conv}{{\rm conv}}
\def\expandafter\normalsize\expandafter{%
    \normalsize
    \setlength\abovedisplayshortskip{8pt}
    \setlength\belowdisplayshortskip{8pt}
}
\begin{document}


\title[Strong Feller processes with measure-valued drifts]{Strong Feller processes with measure-valued drifts}

\date{}

\author{D.~Kinzebulatov}

\begin{abstract}

We construct a strong Feller process associated with $-\Delta + \sigma \cdot \nabla$,
with drift $\sigma$ in a wide class of measures (weakly form-bounded measures, e.g.~combining weak $L^d$ and Kato class  measure singularities), by exploiting a quantitative dependence of the smoothness of the domain  of an operator realization of $-\Delta + \sigma \cdot \nabla$ generating a holomorphic $C_0$-semigroup on $L^p(\mathbb R^d)$, $p>d-1$, on the value of the form-bound of $\sigma$.  
Our method admits extension to other types of perturbations of $-\Delta$ or $(-\Delta)^{\frac{\alpha}{2}}$, e.g.~to yield
new $L^p$-regularity results for Schr\"{o}dinger operators with form-bounded measure potentials.

\end{abstract}

\address{{\scriptsize Department of Mathematics, University of Toronto, 40 St.~George Str., Toronto, ON, M5S2E4, Canada}}

\email{damir.kinzebulatov@utoronto.ca}

\subjclass[2010]{35J15, 47D07 (primary), 35J75 (secondary)}

\keywords{Feller processes, measure-valued drift, regularity of solutions, non-local operators}

\maketitle

\subsection{}Let $\mathcal L^d$ be the Lebesgue measure on $\mathbb R^d$, $L^p=L^p(\mathbb R^d,\mathcal L^d)$, $L^{p,\infty}=L^{p,\infty}(\mathbb R^d,\mathcal L^d)$  
and $W^{1,p}=W^{1,p}(\mathbb R^d,\mathcal L^d)$ 
the standard Lebesgue, weak Lebesgue
and Sobolev 
spaces, $C^{0,\gamma}=C^{0,\gamma}(\mathbb R^d)$ the space of H\"{o}lder continuous functions ($0<\gamma<1$), $C_b=C_b(\mathbb R^d)$ the space of bounded continuous functions, endowed with the $\sup$-norm, 
$C_\infty \subset C_b$ the closed subspace of functions vanishing at infinity,
$\mathcal W^{s,p}$, $s>0$, the Bessel  space endowed with norm $\|u\|_{p,s}:=\|g\|_p$,  
$u=(1-\Delta)^{-\frac{s}{2}}g$, $g \in L^p$, $\mathcal W^{-s,p}$ the dual of $\mathcal W^{s,p}$, and  $\mathcal S=\mathcal S(\mathbb R^d)$ the L.~Schwartz space of test functions.
We denote by $\mathcal B(X,Y)$ the space of bounded linear operators between complex Banach spaces $X \rightarrow Y$, endowed with operator norm $\|\cdot\|_{X \rightarrow Y}$;  $\mathcal B(X):=\mathcal B(X,X)$. Set $\|\cdot\|_{p \rightarrow q}:=\|\cdot\|_{L^p \rightarrow L^q}$.
We denote by $\overset{w}{\rightarrow}$ the weak convergence of $\mathbb R^d$- or $\mathbb C^d$-valued measures on $\mathbb R^d$, and the weak convergence in a given Banach space.

By $\langle u,v\rangle$ we denote the inner product in $L^2$,
$$
\langle u,v\rangle = \langle u\bar{v}\rangle :=\int_{\mathbb R^d}u\bar{v}\mathcal L^d \qquad (u, v \in L^{2}).
$$


\subsection{}Let $d \geqslant 3$. The problem of constructing a Feller process having infinitesimal generator $-\Delta + b\cdot\nabla$, with singular drift $b:\mathbb R^d \rightarrow \mathbb R^d$,
has been thoroughly studied in the literature (cf.~\cite{AKR,KR} and references therein), motivated by
applications, as well as the search for the maximal (general) class of vector fields $b$ such that the associated process exists. This search culminated in the following classes of critical drifts:

\begin{definition}A vector field $b:\mathbb R^d \rightarrow \mathbb R^d$ is said to belong to $\mathbf{F}_{\delta}$, the class of form-bounded vector fields, if $b$ is  $\mathcal L^d$-measurable and
there exists $\lambda = \lambda_\delta > 0$ such that 
$$
\| b (\lambda - \Delta )^{-\frac{1}{2}} \|_{2 \rightarrow 2} \leqslant \sqrt{\delta}. 
$$
\end{definition}

\begin{definition}A vector field $b:\mathbb R^d \rightarrow \mathbb R^d$ is said to belong to the Kato class $\mathbf{K}^{d+1}_\delta$ if $b$ is  $\mathcal L^d$-measurable and there exists $\lambda = \lambda_\delta > 0$ such that
$$
\| b (\lambda - \Delta)^{-\frac{1}{2}} \|_{1 \rightarrow 1} \leqslant \delta.
$$
\end{definition}

\begin{figure}
\begin{equation*}
\bfig
\node a1(0,0)[\qquad L^p+L^\infty~(p>d)]
\node a2(200,500)[L^d+L^\infty]
\node a3(400,1000)[L^{d,\infty}+L^\infty]
\node a4(600,1500)[\mathbf{F}_{\delta^2}]
\node a5(1000,500)[\mathbf{F}_0]
\node b1(-600,1500)[\mathbf{K}_\delta^{d+1}]
\node b0(-400,1000)[\mathbf{K}_0^{{d+1}}]
\arrow[a1`a2;]
\arrow[a2`a3;]
\arrow[a3`a4;]
\arrow[a5`a4;]
\arrow[a2`a5;]
\arrow[a1`b0;]
\arrow[b0`b1;]
\efig
\end{equation*}
\footnotesize{
Here $\rightarrow$ stands for $\subsetneq$, inclusion of vector spaces. \\
The inclusions $L^d + L^\infty \subsetneq \mathbf{F}_0:=\bigcap_{\delta>0}\mathbf{F}_{\delta}$, 
$L^{d,\infty} + L^\infty \subsetneq  \bigcup_{\delta>0} \mathbf{F}_{\delta}$
follow from the Sobolev embedding theorem, and the Strichartz inequality with sharp constants \cite{KPS}, respectively.
}
\end{figure}

 
We have:

1) $b(x)=\sqrt{\delta} \frac{d-2}{2}x|x|^{-2} \in \mathbf{F}_{\delta}$ (Hardy inequality).

2) Also, if $|b(x)| \leqslant \mathbf{1}_{|x_1|<1}|x_1|^{s-1}$, where $0<s<1$, $x=(x_1,\dots,x_d)$, $\mathbf{1}_{|x_1|<1}$ is the characteristic function of $\{x:|x_1|<1\}$, 
then $b \in \mathbf{K}^{d+1}_0$. 
An example of a $b \in \mathbf{K}^{d+1}_\delta \setminus \mathbf{K}^{d+1}_0$ can be obtained e.g.~by modifying \cite[p.~250, Example 1]{AS}\footnote{The value of the relative bound $\delta$ plays a crucial role in the theory of $-\Delta + b\cdot\nabla$, e.g.~if $\delta>4$, then the uniqueness of solution of Cauchy problem for
$\partial_t-\Delta + \sqrt{\delta} \frac{d-2}{2}x|x|^{-2} \cdot \nabla$ fails in $L^p$, see \cite[Example 7]{KS}, see also comments below.}.
Examples 1), 2) demonstrate that
$\mathbf{K}_\delta^{d+1} \setminus \mathbf{F}_{\delta_1} \neq \varnothing$, and $\mathbf{F}_{\delta_1} \setminus \mathbf{K}_\delta^{d+1} \neq \varnothing.$

It is clear that
\begin{equation*}
b \in \mathbf{F}_{\delta}\,\, (\text{or }\mathbf{K}_{\delta}^{d+1}) \quad \Leftrightarrow \quad \varepsilon b \in \mathbf{F}_{\varepsilon\delta} \,\,(\text{respectively, }\mathbf{K}_{\varepsilon\delta}^{d+1}), \quad \varepsilon>0.
\end{equation*}
In particular, there exist $b \in \mathbf{F}_{\delta}$ ($\mathbf{K}_\delta^{d+1}$) such that $\varepsilon b \not\in \mathbf{F}_0$ ($\mathbf{K}_0^{d+1}$) for any $\varepsilon>0$ (cf.~examples above).
The vector fields in $\mathbf{F}_{\delta} \setminus \mathbf{F}_0$ and $\mathbf{K}^{d+1}_\delta \setminus \mathbf{K}^{d+1}_0$
have critical order singularities (i.e.~sensitive to multiplication by a constant), at isolated points or along hypersurfaces, respectively. 

\smallskip

Earlier, the Kato class $\mathbf{K}_\delta^{d+1}$, with $\delta>0$ sufficiently small (but nevertheless allowed to be positive), has been recognized as `the right one' for
the existence of the Gaussian upper and lower bounds on the fundamental solution of $-\Delta + b\cdot \nabla$, see \cite{S,Zh}; the Gaussian bounds yield an operator realization of  $-\Delta + b\cdot\nabla$ generating
a (contraction positivity preserving) $C_0$-semigroup in $C_\infty$ (moreover, in $C_b$), whose integral kernel is  the transition probability function of a Feller process.
In turn, $b \in \mathbf{F}_\delta$, $\delta<4$, 
ensures that $-\Delta + b\cdot\nabla$ is dissipative in $L^p$, $p>\frac{2}{2-\sqrt{\delta}}$ \cite{KS}; then, if $\delta<\min\{1,\bigl(\frac{2}{d-2}\bigr)^2\}$, the $L^p$-dissipativity allows to run a Moser-type iterative procedure of  \cite{KS}, which takes $p \rightarrow \infty$ and thus
produces
an operator realization of  $-\Delta + b\cdot\nabla$ generating
a $C_0$-semigroup in $C_\infty$, hence a Feller process.

The natural next step toward determining the general class of drifts $b$ `responsible' for the existence of an associated Feller process is to consider $b=b_1+b_2$, with $b_1 \in \mathbf{F}_{\delta_1}$, $b_2 \in \mathbf{K}^{d+1}_{\delta_2}$.
Although it is not clear how to reconcile the 
dissipativity in $L^p$ and the Gaussian bounds, it turns out that neither of these properties is responsible for the existence of the process; in fact, the process exists for any $b$ in the following class \cite{Ki}:


%

\begin{definition}A vector field $b:\mathbb R^d \rightarrow \mathbb R^d$ is said to belong to $\mathbf{F}_\delta^{\scriptscriptstyle \frac{1}{2}}$, the class of \textit{weakly} form-bounded vector fields, if $b$ is  $\mathcal L^d$-measurable, and there exists $\lambda = \lambda_\delta > 0$ such that
$$
\| |b|^\frac{1}{2} (\lambda - \Delta)^{-\frac{1}{4}} \|_{2 \rightarrow 2} \leqslant \sqrt{\delta}.
$$
\end{definition}

The class $\mathbf{F}_\delta^{\scriptscriptstyle \frac{1}{2}}$ has been introduced in \cite[Theorem 5.1]{S2}. We have
$$\mathbf{K}_\delta^{d+1} \subsetneq \mathbf{F}_\delta^{\scriptscriptstyle \frac{1}{2}}, \quad
\mathbf{F}_{\delta^2} \subsetneq \mathbf{F}_\delta^{\scriptscriptstyle \frac{1}{2}},$$ 
\begin{equation}
\label{sum_prop}
b \in  \mathbf{F}_{\delta_1}^{~} \text{ and } \mathsf{f} \in \mathbf{K}^{d+1}_{\delta_2}  \quad \Longrightarrow \quad b + \mathsf{f} \in \mathbf{F}^\frac{1}{2}_{\delta}, \; \sqrt{\delta} = \sqrt[4]{\delta_1} + \sqrt{\delta_2}
\end{equation}
(see \cite{Ki}).
In \cite{Ki}, the construction of the process  goes as follows:
the starting object is an operator-valued function ($b \in \mathbf{F}_\delta^{\scriptscriptstyle \frac{1}{2}}$)
\begin{align*}
&\Theta_p(\zeta,b):=(\zeta-\Delta)^{-1} \\&-(\zeta-\Delta)^{-\frac{1}{2}-\frac{1}{2q}}\underbrace{(\zeta-\Delta)^{-\frac{1}{2q'}}|b|^{\frac{1}{p'}}}_{\in \mathcal B(L^p)}
\underbrace{\bigl(1+b^{\frac{1}{p}}\cdot\nabla(\zeta-\Delta)^{-1}|b|^{\frac{1}{p'}}\bigr)^{-1}}_{\in \mathcal B(L^p)}  \underbrace{ b^{\frac{1}{p}}\cdot\nabla(\zeta-\Delta)^{-\frac{1}{2}-\frac{1}{2r}} }_{\in \mathcal B(L^p)}\, (\zeta-\Delta)^{-\frac{1}{2r'}},
\end{align*}
where $\Real \zeta> \frac{d}{d-1}\lambda_\delta$, $b^{\frac{1}{p}}:=b|b|^{\frac{1}{p}-1}$, $p$ is in a bounded open interval determined by the form-bound $\delta$ (and expanding to $(1,\infty)$ as $\delta \downarrow 0$), and $1<r<p<q$. Then (see \cite{Ki} for details)
$$\Theta_p(\zeta,b)=(\zeta+\Lambda_p(b))^{-1},$$
where $\Lambda_p(b)$ is an operator realization of $-\Delta + b\cdot\nabla$ generating a holomorphic $C_0$-semigroup $e^{-t\Lambda_p(b)}$ on $L^p$, and the very definition of $\Theta_p(\zeta,b)$
implies that 
the domain of $\Lambda_p(b)$ $$D(\Lambda_p(b)) \subset \mathcal W^{1+\frac{1}{q},p}, \quad \text{for any } q>p.$$  
The information about smoothness of $D(\Lambda_p(b))$ allows us to leap, by means of the Sobolev embedding theorem, from $L^p$, $p>d-1$, to $C_\infty$, while moving the burden of the proof of convergence in $C_\infty$ (in the Trotter's approximation theorem) to $L^p$, a space having much weaker topology (locally).
Then (see \cite{Ki}) $\Theta_p(\mu,b)|_{\mathcal S}=(\mu+\Lambda_{C_\infty}(b))^{-1}|_{\mathcal S}$, where $\Lambda_{C_\infty}(b)$ is an operator realization of $-\Delta + b\cdot\nabla$ generating a contraction positivity preserving $C_0$-semigroup on $C_\infty$, hence a Feller process.


\subsection{}The primary goal of this note is to extend the method in \cite{Ki} to weakly form-bounded measure drifts.

The study of measure perturbations of $-\Delta$ has a long history, see e.g.~
\cite{AM,SV}, where the $L^p$-regularity theory of $-\Delta$ (more generally, 
of a Dirichlet form) perturbed by a measure potential in the corresponding Kato class was developed, $1 \leqslant p <\infty$ 
(cf.~Corollary \ref{cor3} below). 

Recently, \cite{BC} constructed a strong Feller process associated with $-\Delta + \sigma\cdot \nabla$ with a $\mathbb R^d$-valued measure $\sigma$ in the Kato class $\bar{\mathbf{K}}_\delta^{d+1}$ (see definition below), for $\delta=0$, running  perturbation-theoretic techniques in $C_b$, thus
obtaining
e.g.~a Brownian motion drifting upward when penetrating certain fractal-like sets.
We strengthen their result in Theorem \ref{cor1} below.



%

\begin{definition}A $\mathbb C^d$-valued Borel measure $\sigma$ on $\mathbb R^d$ 
is said to belong to $\bar{\mathbf{F}}^{\scriptsize \frac{1}{2}}_\delta$, the class of weakly form-bounded measures, if 
 there exists $\lambda = \lambda_\delta > 0$ such that
$$
\int_{\mathbb R^d} \biggl((\lambda-\Delta)^{-\frac{1}{4}}(x,y) f(y)dy\biggr)^2 |\sigma|(dx) \leqslant \delta \|f\|^2_2,  \quad f \in \mathcal S.,
$$ 
where $|\sigma|:=|\sigma_1|+\dots+|\sigma_d|$ is the variation of $\sigma$.
Clearly,  $\mathbf{F}^{\scriptsize \frac{1}{2}}_\delta \subset \bar{\mathbf{F}}^{\scriptsize \frac{1}{2}}_\delta$.
\end{definition}

\begin{definition} 
A $\mathbb C^d$-valued Borel measure $\sigma$ on $\mathbb R^d$ 
is said to belong to the Kato class  $\bar{\mathbf{K}}_\delta^{d+1}$ if there exists $\lambda = \lambda_\delta > 0$ such that
$$
\sup_{x \in \mathbb R^d} \int_{\mathbb R^d}(\lambda - \Delta)^{-\frac{1}{2}}(x,y) |\sigma|(dy)
\leqslant \delta.
$$
\end{definition}

See \cite{BC} for examples of measures in $\bar{\mathbf{K}}_0^{d+1}$.

It is clear that $\mathbf{K}_\delta^{d+1} \subset \bar{\mathbf{K}}_\delta^{d+1}$. 
By Lemma \ref{approx_lem} below, $\bar{\mathbf{K}}_\delta^{d+1} \subset \bar{\mathbf{F}}^{\scriptsize \frac{1}{2}}_\delta$. 

\smallskip


\begin{figure*}
\begin{equation*}
\bfig


\node a1(0,0)[\qquad L^p+L^\infty~(p>d)]
\node a2(0,500)[L^d+L^\infty]
\node a3(0,1000)[L^{d,\infty}+L^\infty]
\node a4(0,1500)[\mathbf{F}_{\delta^2}]
\node a5(700,1000)[\mathbf{F}_0]
\node b1(-350,1500)[\mathbf{K}_\delta^{d+1}]
\node b0(-700,1000)[\mathbf{K}_0^{{d+1}}]
\node c1(0,2000)[\mathbf{F}_\delta^{\scriptsize \frac{1}{2}}]

\node d1(-700,1500)[\bar{\mathbf{K}}_0^{d+1}]
\node d2(-350,2000)[\bar{\mathbf{K}}_\delta^{d+1}]
\node d3(0,2500)[\mathbf{F}_{\delta_1}^{\scriptsize \frac{1}{2}}+ \bar{\mathbf{K}}_{\delta_2}^{d+1}]
\node d4(0,3000)[\bar{\mathbf{F}}_\delta^{\scriptsize \frac{1}{2}}]

\arrow[a1`a2;]
\arrow[a2`a3;]
\arrow[a3`a4;]
\arrow[a5`a4;]
\arrow[a2`a5;]
\arrow[a1`b0;]
\arrow[b0`b1;]
\arrow[a4`c1;]
\arrow[b1`c1;]

\arrow[b0`d1;]
\arrow[b1`d2;]
\arrow[c1`d3;]

\arrow[d1`d2;]
\arrow[d2`d3;]
\arrow[d3`d4;]


\efig
\end{equation*}

\footnotesize{The general classes of drifts studied in the literature in connection with operator $-\Delta + \sigma \cdot \nabla$. \\ Here we identify $b(x)$ with $b(x)\mathcal L^d$. }

\label{diag}

\end{figure*}

The operator-valued function $\Theta_p(\zeta,\sigma)$, $\Real \zeta> \frac{d}{d-1}\lambda_\delta$  (see above), `a candidate' for the resolvent of the desired operator realization of $-\Delta + \sigma \cdot \nabla$ generating a $C_0$-semigroup on $C_\infty$, is not well defined for a  $\sigma$ having  non-zero singular part. We modify the method in \cite{Ki}.
Also, in contrast to the setup of \cite{Ki}, a general $\sigma$ doesn't admit a monotone approximation by regular vector fields $v_k$ (i.e.~by $v_k\mathcal L^d$),  
which complicates the proof of convergence $\Theta_2(\zeta,v_k\mathcal L^d) \overset{s}{\rightarrow} \Theta_2(\zeta,\sigma)$ in $L^2$, needed to carry out the method. We overcome this difficulty using an important variant of the Kato-Ponce inequality by \cite{GO} (see also \cite{BL}) (Proposition \ref{lem5} below).

Our method depends on the fact that the operators $-\Delta$, $\nabla$ constituting $-\Delta + \sigma \cdot \nabla$ commute. 
In particular, our method admits a straightforward generalization to $(-\Delta)^{\frac{\alpha}{2}} + \sigma \cdot \nabla$, where $(-\Delta)^{\frac{\alpha}{2}}$ is the fractional Laplacian, $1<\alpha<2$, with measure $\sigma$ weakly form-bounded with respect to $\Delta^{\alpha-1}$, i.e.
$$
\int_{\mathbb R^d} \biggl((\lambda-\Delta)^{-\frac{\alpha-1}{4}}(x,y) f(y)dy\biggr)^2 |\sigma|(dx) \leqslant \delta \|f\|^2_2,  \quad f \in \mathcal S
$$
for some $\lambda = \lambda_\delta > 0$.
(We note that the potential theory of operator $-\Delta^{\frac{\alpha}{2}}$ perturbed by a drift in the corresponding Kato class, as well as its associated process, attracted a lot of attention recently, see \cite{BJ, CKSo, KSo} and references therein.)

In Theorems \ref{cor0}, \ref{cor1} (but not in Corollary \ref{cor3}) we assume that $\sigma$
admits an approximation by (weakly) form-bounded measures $\ll \mathcal L^d$ having the same form-bound $\delta$ (in fact, $\delta+\varepsilon$, for an arbitrarily small $\varepsilon>0$ independent of $k$). We verify this assumption for $\sigma = b\mathcal L^d + \nu$, 
$$b \mathcal L^d \in \bar{\mathbf{F}}^{\scriptscriptstyle \frac{1}{2}}_{\delta_1}, \qquad \nu \in \bar{\mathbf{K}}_{\delta_2}^{d+1}, \qquad \sqrt{\delta}:=\sqrt{\delta_1}+\sqrt{\delta_2},$$
but do not address, in this note, the issue of constructing such an approximation for a general $\sigma$; we also do not address the issue (we believe, related) of constructing weakly form-bounded vector fields whose singularities are principally different from those of $\mathbf{F}_{\delta_1^2} + \mathbf{K}_{\delta_2}^{d+1}$ (cf.~\eqref{sum_prop}).

%

%

%

\subsection{}We proceed to precise formulations of our results.

%
%
%
%
%
%
%
%


\begin{notation}
Let
\begin{equation}
\label{m_d}
m_{d}:=\inf_{\kappa>0} \sup_{\substack{x \neq y, \\ \Real\zeta>0}} \frac{|\nabla (\zeta-\Delta)^{-1}(x,y)|}{\bigl(\kappa^{-1}\Real\zeta-\Delta\bigr)^{-\frac{1}{2}}(x,y)}
\end{equation}
(note that $m_d$ is bounded from above by $\pi^{\frac{1}{2}} (2e)^{-\frac{1}{2}} d^\frac{d}{2} (d-1)^{\frac{1-d}{2}}<\infty$, see~\cite[(A.1)]{Ki}),
$$
 \mathcal J:=\biggl(1+\frac{1}{1+\sqrt{1-m_{d}\delta}},1+\frac{1}{1-\sqrt{1-m_{d}\delta}} \biggr).
$$
%
\end{notation}

\smallskip

\begin{theorem}[$L^p$-theory of $-\Delta + \sigma \cdot \nabla$]
\label{cor0} 

Let $d \geqslant 3$. Assume that $\sigma$ is a $\mathbb C^d$-valued Borel measure  in $\bar{\mathbf{F}}^{\scriptscriptstyle \frac{1}{2}}_{\delta}$ such that
$
\sigma = b \mathcal L^d + \nu,
$ where $b:\mathbb R^d \rightarrow \mathbb C^d$,  $$b \mathcal L^d \in \bar{\mathbf{F}}^{\scriptscriptstyle \frac{1}{2}}_{\delta_1}, \qquad \nu \in \bar{\mathbf{K}}_{\delta_2}^{d+1}, \qquad \sqrt{\delta}:=\sqrt{\delta_1}+\sqrt{\delta_2},$$ or, more generally {\rm(}see Lemma \ref{approx_lem} below{\rm)},
$\sigma \in \bar{\mathbf{F}}^{\scriptscriptstyle \frac{1}{2}}_{\delta}(\lambda)$
is such that
there exist 
$v_k \in C_0^\infty(\mathbb R^d, \mathbb C^d)$, $v_k\mathcal L^d \in  \bar{\mathbf{F}}^{\scriptscriptstyle \frac{1}{2}}_{\delta}(\lambda)$, 
$
v_k \mathcal L^d \overset{w}{\longrightarrow} \sigma.
$

\smallskip
If $m_{d}\delta<1$, then for every $p \in \mathcal J$:

\smallskip

{\rm(}\textit{i}{\rm)}  There exists a holomorphic $C_0$-semigroup $e^{-t \Lambda_p(\sigma)}$ in $L^p$ such that, possibly after replacing $v_k\mathcal L^d$'s with a sequence of their convex combinations (also weakly converging to measure $\sigma$),
 we have
$$e^{-t \Lambda_p (v_k \mathcal L^d)} \overset{s}{\rightarrow} e^{-t \Lambda_p(\sigma)} \text{ in $L^p$},$$
as $k \uparrow \infty$, where
$$\Lambda_{p}(v_k  \mathcal L^d):=-\Delta + v_k \cdot \nabla, \quad D(\Lambda_p(v_k \mathcal L^d))=W^{2,p}.$$

\smallskip

{\rm(}\textit{ii}{\rm)}~The resolvent set $\rho(-\Lambda_{p}(\sigma))$ contains a  half-plane
$
\mathcal O \subset \{\zeta \in \mathbb C: \Real \zeta>0\},
$
and the resolvent $(\zeta+\Lambda_p(\sigma))^{-1}$, $\zeta \in \mathcal O$, admits an extension by continuity to a bounded linear operator in $\mathcal B\left(\mathcal W^{-\frac{1}{r'},p},\mathcal W^{1+\frac{1}{q},p}\right),$
where $1 \leqslant r<\min\{2,p\}$,  $\max\{2,p\}<q$.

\smallskip

{\rm(}\textit{iii}{\rm)}~The domain of the generator
$
D\bigl(\Lambda_p(\sigma)\bigr)\subset \mathcal W^{1+\frac{1}{q},p}
$ 
for every $q>\max\{p,2\}$.

\end{theorem}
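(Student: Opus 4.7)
The plan is to realize $\Lambda_p(\sigma)$ as a Trotter--Kato limit of the approximating operators $\Lambda_p(v_k\mathcal L^d)$, following the operator-theoretic scheme of \cite{Ki}. The obstruction is that the defining expression $\Theta_p(\zeta,\sigma)$ of \cite{Ki} requires multiplicative factors $|\sigma|^{1/p}$, $\sigma^{1/p}$ which are not meaningful once $\sigma$ carries non-trivial singular mass, so the candidate resolvent $(\zeta+\Lambda_p(\sigma))^{-1}$ has to be produced as a subsequential operator limit of $\Theta_p(\zeta,v_k)$ rather than by a closed-form formula.

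The first step is uniform estimates. Since every $v_k\mathcal L^d$ lies in $\bar{\mathbf F}^{\scriptscriptstyle 1/2}_\delta(\lambda)$ with the \emph{same} $\lambda$ and $\delta$, the construction of \cite{Ki} applies uniformly in $k$: for $\Real\zeta>\frac{d}{d-1}\lambda$ and $p\in\mathcal J$ one has $\Theta_p(\zeta,v_k)=(\zeta+\Lambda_p(v_k\mathcal L^d))^{-1}$, and the assumption $m_d\delta<1$ secures invertibility of the middle Neumann factor $\bigl(1+v_k^{1/p}\cdot\nabla(\zeta-\Delta)^{-1}|v_k|^{1/p'}\bigr)^{-1}$ with norm independent of $k$, via the kernel estimate defining $m_d$ in \eqref{m_d}. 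Reading each of the remaining factors as a bounded map between the appropriate Bessel spaces then yields
\[
\sup_k\|\Theta_p(\zeta,v_k)\|_{\mathcal W^{-1/r',p}\to\mathcal W^{1+1/q,p}}\le C(\zeta,r,q)
\]
for the parameters of (ii); in particular the family $\{\Theta_p(\zeta,v_k)\}$ is uniformly bounded on $L^p$ and analytic in $\zeta$ on a common sector.

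The central step is the passage to the limit. By weak$^{*}$ compactness in $\mathcal B(L^p)$ one extracts a subsequence along which $\Theta_p(\zeta,v_k)$ converges weakly; then Mazur's theorem, applied on a countable dense subset of $L^p$ with a diagonal argument, allows replacement of the $v_k$ by convex combinations (still weakly converging to $\sigma$ as measures) so as to obtain strong operator convergence to a limit $T(\zeta)$. The resolvent identity is inherited from $\Theta_p(\zeta,v_k)$. To identify $T(\zeta)$ with a resolvent of $-\Delta+\sigma\cdot\nabla$ one tests against $\varphi\in\mathcal S$ and must pass to the limit in the drift pairing
\[
\int_{\mathbb R^d}\varphi\,\nabla\bigl((\zeta-\Delta)^{-1-\frac{1}{2q}}f\bigr)\cdot v_k\,d\mathcal L^d\;\longrightarrow\;\int_{\mathbb R^d}\varphi\,\nabla\bigl((\zeta-\Delta)^{-1-\frac{1}{2q}}f\bigr)\cdot d\sigma;
\]
this is where the Kato--Ponce type inequality of \cite{GO,BL} (Proposition \ref{lem5} below) is invoked, supplying the fractional-smoothness control that places the integrand into $C_b$ uniformly, rendering it a legitimate test function for the weak convergence $v_k\mathcal L^d\overset{w}{\rightarrow}\sigma$. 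The main obstacle lies precisely here: the non-linear factors $|v_k|^{1/p}$, $v_k^{1/p}$ of $\Theta_p$ admit no weak-limit counterparts for $\sigma$, so the individual pieces of $\Theta_p(\zeta,v_k)$ do not converge separately; only the composite operator, suitably regrouped into a single Bessel-space pairing, does, and only under the Kato--Ponce control.

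Once $T(\zeta)=(\zeta+\Lambda_p(\sigma))^{-1}$ is established, (i) follows from the Trotter--Kato theorem applied to the uniformly sectorial family $\{\Lambda_p(v_k\mathcal L^d)\}$; (ii) descends from the uniform mapping bound above by lower semicontinuity under strong operator limits; and (iii) is the specialization of (ii) to $r$ chosen so that $L^p\hookrightarrow\mathcal W^{-1/r',p}$, restricted to $D(\Lambda_p(\sigma))$.
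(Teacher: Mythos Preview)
Your overall Trotter--Kato strategy is correct, but you have missed the paper's central technical device and, as a result, your argument has a genuine gap at the identification step. The paper does \emph{not} construct $\Theta_p(\zeta,\sigma)$ as an abstract subsequential limit; it gives an explicit closed-form formula by taking a detour through $L^2$. Namely, on $L^2$ one can write $Z_2(\zeta,\sigma)=(\zeta-\Delta)^{-1/4}\,\sigma\cdot\nabla(\zeta-\Delta)^{-3/4}$ directly as an integral operator against the measure $\sigma$ (no fractional powers of $|\sigma|$ are needed), prove $\|Z_2\|_{2\to2}\le\delta<1$, set $\Omega_2=(\zeta-\Delta)^{-\frac12(\frac12-\frac1q)}(1+Z_2)^{-1}(\zeta-\Delta)^{-\frac12(\frac12-\frac{1}{r'})}$, and then \emph{define} $\Omega_p$ as the $L^p$-closure of $\Omega_2|_{L^2\cap L^p}$. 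The candidate resolvent $\Theta_p(\zeta,\sigma)=(\zeta-\Delta)^{-\frac12-\frac1{2q}}\Omega_p(\zeta-\Delta)^{-\frac1{2r'}}$ is thus given from the outset, and (ii), (iii) can be read off its very structure once $\Omega_p\in\mathcal B(L^p)$ is established. This explicit target is what makes the convergence argument tractable: one proves $\Omega_2(\zeta,\hat v_n\mathcal L^d)\overset{s}{\to}\Omega_2(\zeta,\sigma)$ in $L^2$, and then transfers to $L^p$ by the elementary H\"older bound $\|\cdot\|_p^p\le\|\cdot\|_{2(p-1)}^{p-1}\|\cdot\|_2$ together with the uniform $L^{2(p-1)}$ estimate (this is why $p\in\mathcal J$ rather than the larger interval of \cite{Ki}).

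Your use of Mazur's theorem and of the Kato--Ponce inequality is also misplaced relative to the paper. Mazur is not applied to weak-$*$ operator limits in $\mathcal B(L^p)$; it is applied, inside the $L^2$ argument, to the single sequence of \emph{vectors} $(\lambda-\Delta)^{-1/4}\eta_r(v_k-\sigma)(\lambda-\Delta)^{-1/4}c$ (for a fixed Gaussian $c$) to upgrade weak to strong $L^2$ convergence. The Kato--Ponce inequality of \cite{GO} then enters to show that this strong convergence of a single vector forces $(\lambda-\Delta)^{-1/4}\eta_r(\hat v_n-\sigma)\cdot\nabla(\lambda-\Delta)^{-3/4}\overset{s}{\to}0$ on all of $L^2$: one rewrites the squared norm as a pairing $\langle(\lambda-\Delta)^{-1/4}\varphi_\ell\Phi,\,(\lambda-\Delta)^{1/4}(fg_\ell)\rangle$ and bounds $\|(\lambda-\Delta)^{1/4}(fg_\ell)\|_2$ by Kato--Ponce. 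It is not used to place a test function into $C_b$; the pairing you wrote down, with $\varphi,f\in\mathcal S$, is already a legitimate test for weak convergence of measures without any fractional Leibniz rule. In your scheme the limit $T(\zeta)$ is produced abstractly, and you still owe an argument that (a) a \emph{single} sequence of convex combinations works simultaneously for all $\zeta$ in the half-plane (needed for the resolvent identity), and (b) the weak equation you obtain actually pins down $T(\zeta)$ uniquely as a resolvent and yields the precise Bessel-space mapping property of (ii); strong $L^p$ convergence alone does not give the $\mathcal W^{-1/r',p}\to\mathcal W^{1+1/q,p}$ bound by lower semicontinuity. The paper sidesteps all of this by having the explicit $\Theta_p(\zeta,\sigma)$ in hand before proving any convergence.
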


\begin{remarks}

\textbf{I.}~If $\sigma \ll \mathcal L^d$, then the interval $\mathcal J\ni p$ in Theorem \ref{cor0} can be extended, see \cite{Ki} (in \cite{Ki} we work directly in $L^p$, while in the proof of Theorem \ref{cor0} we 
have to first prove our convergence results in $L^2$, and then transfer them to $L^p$ (Proposition \ref{lem50_p}), hence the more restrictive assumptions on $p$).

%
%

\textbf{II}.~A straightforward modification of the proof of Theorem \ref{cor0} yields:

\begin{corollary}[$L^p$-theory of $-\Delta + \Psi$] 

\label{cor3}
Let $d \geqslant 3$. Assume that $\Psi$ is a $\mathbb C$-valued Borel measure 
such that
$$
\int_{\mathbb R^d} \biggl((\lambda-\Delta)^{-\frac{1}{2}}(x,y) f(y)dy\biggr)^2 |\Psi|(dx) \leqslant \delta \|f\|^2_2,  \quad f \in \mathcal S,
$$ 
for some $\lambda=\lambda_\delta>0$. We write  $\Psi \in \bar{\mathbf{F}}_{\delta}\bigl(\Delta,\lambda\bigr)$. Set $V_k:=\rho_k e^{\varepsilon_k \Delta} \Psi,$ $\varepsilon_k \downarrow 0,$ where $\rho_k \in C_0^\infty$, $0 \leqslant \rho_k \leqslant 1$, $\rho \equiv 1$ in $\{|x| \leq k\}$, $\rho \equiv 0$ in $\{|x| \geq k+1\}$, so that
$$V_k\mathcal L^d \in \bar{\mathbf{F}}_\delta(\Delta,\lambda) \text{ for all $k$},  \qquad V_k\mathcal L^d \overset{w}{\rightarrow} \Psi \text{ as $k \uparrow \infty$}$$ {\rm(}see Lemma \ref{approx_lem2} below{\rm)}.
If $\delta<1$, then for every $p \in \bigl(1+\frac{1}{1+\sqrt{1-\delta}},1+\frac{1}{1-\sqrt{1-\delta}} \bigr)$ there exists a holomorphic $C_0$-semigroup $e^{-t \Pi_p(\Psi)}$ in $L^p$ such that
$$e^{-t \Pi_p (V_k \mathcal L^d)} \overset{s}{\rightarrow} e^{-t \Pi_p(\Psi)} \text{ in $L^p$},$$
where $\Pi_{p}(V_k  \mathcal L^d):=-\Delta + V_k,$ $ D(\Pi_p(V_k \mathcal L^d))=W^{2,p},$
possibly after replacing $V_k\mathcal L^d$'s with a sequence of their convex combinations {\rm(}also weakly converging to $\Psi${\rm)}, and the domain of the generator $D\bigl(\Pi_p(\Psi)\bigr)\subset \mathcal W^{\frac{1}{q},p}, $
for any $q>\max\{2,p\}$.

%
%
%
%
%
%

\end{corollary}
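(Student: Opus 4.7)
The proof closely mimics that of Theorem \ref{cor0}, with the drift $\sigma\cdot\nabla$ replaced by scalar multiplication by $\Psi$, and the weakly form-bounded class $\bar{\mathbf{F}}^{\scriptscriptstyle \frac{1}{2}}_\delta$ replaced by the form-bounded class $\bar{\mathbf{F}}_\delta(\Delta,\lambda)$. Lemma \ref{approx_lem2} provides the smooth, compactly supported approximants $V_k := \rho_k e^{\varepsilon_k\Delta}\Psi$ with $V_k\mathcal L^d \in \bar{\mathbf{F}}_\delta(\Delta,\lambda)$ uniformly in $k$ and $V_k\mathcal L^d \overset{w}{\to} \Psi$. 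Since each $V_k$ is bounded, $\Pi_p(V_k) := -\Delta + V_k$ with $D = W^{2,p}$ generates a holomorphic $C_0$-semigroup on $L^p$ by standard bounded-perturbation theory.

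\smallskip

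Next, by analogy with $\Theta_p(\zeta,b)$ of \cite{Ki}, I would express $(\zeta + \Pi_p(V_k))^{-1}$ via the Birman--Schwinger identity as
\[
\Xi_p(\zeta,V_k) := (\zeta-\Delta)^{-1} - (\zeta-\Delta)^{-\frac{1}{2q}} \bigl[(\zeta-\Delta)^{-1+\frac{1}{2q}} V_k^{\frac{1}{p}}\bigr] \bigl(1+T_{p,k}(\zeta)\bigr)^{-1} \bigl[|V_k|^{\frac{1}{p'}}(\zeta-\Delta)^{-1+\frac{1}{2r}}\bigr] (\zeta-\Delta)^{-\frac{1}{2r}},
\]
where $T_{p,k}(\zeta) := |V_k|^{\frac{1}{p'}}(\zeta-\Delta)^{-1} V_k^{\frac{1}{p}}$ and $V_k^{\frac{1}{p}} := V_k|V_k|^{\frac{1}{p}-1}$. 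Each bracketed factor is bounded on $L^p$ by Stein complex interpolation from the $p=2$ form-bound, and $\|T_{p,k}(\zeta)\|_{p\to p} < 1$ holds uniformly in $k$ precisely on the stated interval $p \in \bigl(1+\frac{1}{1+\sqrt{1-\delta}}, 1+\frac{1}{1-\sqrt{1-\delta}}\bigr)$, so that $1+T_{p,k}(\zeta)$ is invertible. The leftmost smoother $(\zeta-\Delta)^{-\frac{1}{2q}}$ is what accounts for the announced $\mathcal W^{1/q,p}$ regularity of the domain --- one full derivative less than in the drift case, reflecting the greater singularity of a potential vis-\`a-vis a drift.

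\smallskip

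The core analytic step is the passage to the limit $k\to\infty$: to show $\Xi_p(\zeta,V_k) \overset{s}{\to} \Xi_p(\zeta,\Psi)$ in $L^p$. One first works in $L^2$, where the Kato--Ponce-type inequality of Proposition \ref{lem5} allows the weak convergence $V_k\mathcal L^d \overset{w}{\to} \Psi$ to pass through products involving $|V_k|^{1/2}$ and $(\zeta-\Delta)^{-1/2}$, giving strong convergence of $T_{2,k}(\zeta)$ and hence of $(1+T_{2,k}(\zeta))^{-1}$; transfer to $L^p$ follows via Proposition \ref{lem50_p}. Possibly after Mazur-type convex combinations (as in Theorem \ref{cor0}), one obtains strong $L^p$-convergence of $\Xi_p(\zeta,V_k)$, which Trotter's approximation theorem identifies as the resolvent of a generator $\Pi_p(\Psi)$ of a holomorphic $C_0$-semigroup on $L^p$ realizing $-\Delta+\Psi$; the inclusion $D(\Pi_p(\Psi)) \subset \mathcal W^{1/q,p}$ is then read off the defining formula. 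The main obstacle, as in Theorem \ref{cor0}, lies precisely here: the map $V \mapsto (1+T_{p,V}(\zeta))^{-1}$ is nonlinear, so weak convergence of the measures does not transfer directly through operator inversion, and the combination of the Kato--Ponce inequality with the convex-combination argument is the essential bridge to the strong resolvent convergence required by Trotter's theorem.
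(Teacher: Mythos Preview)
Your proposal is correct and follows essentially the same route the paper intends: the paper does not spell out a separate proof of Corollary~\ref{cor3}, stating only that it is ``a straightforward modification of the proof of Theorem~\ref{cor0}''; your outline---replace $Z_2(\zeta,\sigma)=(\zeta-\Delta)^{-1/4}\sigma\cdot\nabla(\zeta-\Delta)^{-3/4}$ by $(\zeta-\Delta)^{-1/2}\Psi(\zeta-\Delta)^{-1/2}$, drop the constant $m_d$ (no gradient estimate is needed), adjust the outer smoothers so the domain lands in $\mathcal W^{1/q,p}$, and run the same $L^2$-to-$L^p$ machinery (Propositions~\ref{lem5} and~\ref{lem50_p}) with Lemma~\ref{approx_lem2} supplying the approximants---is exactly that modification. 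One minor correction: the stated interval $\bigl(1+\tfrac{1}{1+\sqrt{1-\delta}},\,1+\tfrac{1}{1-\sqrt{1-\delta}}\bigr)$ is \emph{not} where $\|T_{p,k}\|_{p\to p}<1$ (that holds on the larger interval $\bigl(\tfrac{2}{1+\sqrt{1-\delta}},\,\tfrac{2}{1-\sqrt{1-\delta}}\bigr)$, the analog of $\mathcal I$); the smaller interval arises from the H\"older step in Proposition~\ref{lem50_p}, which needs $2(p-1)$ to lie in the larger one.
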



Corollary \ref{cor3} extends the results in \cite{AM,SV} (applied to operator $-\Delta + \Psi$), where a real-valued $\Psi$ is assumed to be in the Kato class $\bar{\mathbf{K}}^d_\delta$ of measures (e.g.~delta-function concentrated on a hypersurface). One disadvantage of Corollary \ref{cor3}, compared to \cite{AM, SV}, is that it 
requires $|\Psi| \leqslant \delta (\lambda-\Delta)$ (in the sense of quadratic forms) rather than $\Psi_- \leqslant \delta(\lambda-\Delta+\Psi_+)$, where $\Psi=\Psi_+-\Psi_-$, $\Psi_{+}, \Psi_- \geqslant 0$.
%


%
%
%
%
%
%
%
%
%
%
%
%
%

\end{remarks}

The purpose of Theorem \ref{cor0} is to prove

\begin{theorem}[$C_\infty$-theory of $-\Delta + \sigma \cdot \nabla$]
\label{cor1}


Let $d \geqslant 3$. Assume that $\sigma$ is a $\mathbb R^d$-valued Borel measure in $\bar{\mathbf{F}}^{\scriptscriptstyle \frac{1}{2}}_{\delta}$ such that
$
\sigma = b \mathcal L^d + \nu,
$ where $b:\mathbb R^d \rightarrow \mathbb R^d$,  $$b \mathcal L^d \in \bar{\mathbf{F}}^{\scriptscriptstyle \frac{1}{2}}_{\delta_1}, \qquad \nu \in \bar{\mathbf{K}}_{\delta_2}^{d+1}, \qquad \sqrt{\delta}:=\sqrt{\delta_1}+\sqrt{\delta_2},$$
 or, more generally {\rm(}see Lemma \ref{approx_lem} below{\rm)},
$\sigma \in \bar{\mathbf{F}}^{\scriptscriptstyle \frac{1}{2}}_{\delta}(\lambda)$
is such that
there exist 
$v_k \in C_0^\infty(\mathbb R^d, \mathbb R^d)$, $v_k\mathcal L^d \in  \bar{\mathbf{F}}^{\scriptscriptstyle \frac{1}{2}}_{\delta}(\lambda)$, 
$
v_k \mathcal L^d \overset{w}{\longrightarrow} \sigma.
$

\smallskip
If $m_{d}\delta<\frac{2d-5}{(d-2)^2}$, then:

\smallskip  

{\rm (\textit{i})} There exists a positivity preserving contraction $C_0$-semigroup $e^{-t\Lambda_{C_\infty}(\sigma)}$ on $C_\infty$ such that 
, possibly after replacing $v_k\mathcal L^d$'s with a sequence of their convex combinations (also weakly converging to measure $\sigma$)
 we have
\begin{equation*}
e^{-t\Lambda_{C_\infty}(v_k\mathcal L^d)} \overset{s}{\longrightarrow} e^{-t\Lambda_{C_\infty}(\sigma)} \text{ in } C_\infty, \quad t \geqslant 0,
\end{equation*}
as $k \uparrow \infty$, where $$\Lambda_{C_\infty}(v_k\mathcal L^d):=-\Delta+v_k \cdot \nabla, \quad D(\Lambda_{C_\infty}(v_k\mathcal L^d))=C^2 \cap C_\infty.$$


{\rm(\textit{ii})}~{\rm[}Strong Feller property{\rm\,]}  $(\mu+\Lambda_{C_\infty}(\sigma))^{-1}|_{\mathcal S}$ can be extended by continuity to a bounded linear operator in $\mathcal B(L^p, C^{0,\gamma})$, $\gamma<1-\frac{d-1}{p}$, for every $d-1<p<1+\frac{1}{1-\sqrt{1-m_{d}\delta}}$. 

\smallskip

{\rm(\textit{iii})}~The
integral kernel $e^{-t\Lambda_{C_\infty}(\sigma)}(x,y)$ {\rm(}$x,y \in \mathbb R^d${\rm)} of $e^{-t\Lambda_{C_\infty}(\sigma)}$ determines the
(sub-Markov) transition probability function of a Feller process.

\end{theorem}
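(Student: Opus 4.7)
The plan is to bootstrap Theorem~\ref{cor0} up to $C_\infty$ via the Sobolev embedding, very much in the spirit of \cite{Ki}. The hypothesis $m_d\delta<\frac{2d-5}{(d-2)^2}$ is exactly the algebraic condition ensuring that the upper endpoint of the interval $\mathcal J$ from Theorem~\ref{cor0} strictly exceeds $d-1$, so one may fix $p\in\mathcal J$ with $p>d-1$. For such $p$ the embedding $\mathcal W^{1+\frac{1}{q},p}\hookrightarrow C^{0,\gamma}$ holds whenever $\gamma<1+\frac{1}{q}-\frac{d}{p}$; letting $q\downarrow p$ produces the range $\gamma<1-\frac{d-1}{p}$ asserted in part~(ii). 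Composition with Theorem~\ref{cor0}(ii),
\[
(\mu+\Lambda_p(\sigma))^{-1}:\;L^p\hookrightarrow\mathcal W^{-\frac{1}{r'},p}\longrightarrow\mathcal W^{1+\frac{1}{q},p}\hookrightarrow C^{0,\gamma},
\]
then yields the strong Feller estimate of~(ii), provided one identifies the restriction of this bounded operator to $\mathcal S$ with $(\mu+\Lambda_{C_\infty}(\sigma))^{-1}|_{\mathcal S}$ for the generator $\Lambda_{C_\infty}(\sigma)$ still to be constructed in the next step.

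For part~(i) the strategy is to run Trotter--Kato in $C_\infty$ with the smooth approximants $v_k\in C_0^\infty$. By classical parabolic theory, each $-\Delta+v_k\cdot\nabla$ generates a positivity preserving contraction $C_0$-semigroup on $C_\infty$ with domain $C^2\cap C_\infty$; write $R_k(\mu)$ for its resolvent, so that $\|R_k(\mu)\|_{C_\infty\to C_\infty}\leqslant\mu^{-1}$ uniformly in $k$. Applying the Sobolev chain above to each $v_k\mathcal L^d$, whose form-bound is the \emph{same} $\delta$ independent of $k$, also yields a uniform bound $\|R_k(\mu)\|_{L^p\to C^{0,\gamma}}\leqslant C$. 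Theorem~\ref{cor0}(i), after the convex-combination passage, delivers $R_k(\mu)f\to R(\mu)f:=(\mu+\Lambda_p(\sigma))^{-1}f$ in $L^p$ for every $f\in L^p$. For $f\in\mathcal S$, the uniform $C^{0,\gamma}$-bound together with Arzel\`a--Ascoli upgrades this to locally uniform convergence. To promote it to sup-norm convergence on all of $\mathbb R^d$, I would split $f=f_R+(f-f_R)$ with $f_R\in C_c$ and $\|f-f_R\|_{C_\infty}$ arbitrarily small, absorb the $(f-f_R)$-contribution into the uniform $C_\infty$-contraction, and control $R_k(\mu)f_R$ outside a large ball by a classical maximum-principle/heat-kernel estimate, available since the $v_k$ are smooth (and indeed compactly supported). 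The limiting $R(\mu)$ is then a bounded operator on $C_\infty$; the resolvent identity, positivity preservation and contractivity pass to the limit, and density of range is immediate from $\mathcal S\subset\mathrm{Range}(R(\mu))$. Hille--Yosida produces the desired generator $\Lambda_{C_\infty}(\sigma)$, and Trotter--Kato upgrades the resolvent convergence to the strong semigroup convergence stated in~(i).

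Part~(iii) should then be routine: once $e^{-t\Lambda_{C_\infty}(\sigma)}$ is a positivity preserving contraction $C_0$-semigroup on $C_\infty$, its integral kernel is produced by dualising the strong Feller bound of~(ii), the sub-Markov property of the kernel is inherited from the approximants, and the associated Feller process is obtained by the standard Blumenthal--Getoor construction. The single genuinely nontrivial step in this program is the sup-norm upgrade of $L^p$-resolvent convergence in part~(i): the uniform $C^{0,\gamma}$-bound yields equicontinuity on compacts but not tightness at infinity, and closing this gap is precisely where the uniform $C_\infty$-contractivity of the \emph{smooth} approximants (a property one does not have for $\sigma$ itself at the outset) becomes indispensable and is the reason one cannot avoid the two-step passage $L^p\to C^{0,\gamma}\to C_\infty$.
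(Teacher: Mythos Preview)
Your overall strategy matches the paper's: fix $p\in\mathcal J$ with $p>d-1$ (which the hypothesis on $\delta$ permits), run Trotter's approximation theorem in $C_\infty$ for the smooth approximants, and read off (ii) from the chain $L^p\to\mathcal W^{1+1/q,p}\hookrightarrow C^{0,\gamma}$. There are, however, two places where your execution diverges from the paper and runs into trouble.

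First, your upgrade from $L^p$-resolvent convergence to $C_\infty$-convergence via Arzel\`a--Ascoli plus a tail estimate is both unnecessary and gappy. The tail step, controlling $R_k(\mu)f_R$ outside a large ball uniformly in $k$, cannot appeal to ``$v_k$ compactly supported'' in any useful way: the $v_k$ produced by Lemma~\ref{approx_lem} involve cutoffs $\rho_k$ supported on $\{|x|\leqslant k+1\}$, so their supports grow with $k$, and a maximum-principle or heat-kernel argument outside $\supp v_k\cup\supp f_R$ will not be uniform. The paper sidesteps this entirely by using the explicit factorization
\[
\Theta_p(\mu,\cdot)=(\mu-\Delta)^{-\frac{1}{2}-\frac{1}{2q}}\,\Omega_p(\mu,\cdot,q,1).
\]
The outer factor $(\mu-\Delta)^{-\frac{1}{2}-\frac{1}{2q}}$ is a \emph{fixed}, $k$-independent bounded operator $L^p\to C_\infty$ (Sobolev embedding, since $p>d-1$ and $q$ is taken close to $p$). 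Hence the strong $L^p$-convergence $\Omega_p(\mu,v_k\mathcal L^d,q,1)\to\Omega_p(\mu,\sigma,q,1)$ from Proposition~\ref{lem50_p} immediately yields strong $C_\infty$-convergence $\Theta_p(\mu,v_k\mathcal L^d)f\to\Theta_p(\mu,\sigma)f$ for $f\in\mathcal S$, and then on all of $C_\infty$ by the uniform contraction bound. No compactness argument and no tail estimate are needed; the ``tightness at infinity'' you flag as the nontrivial step simply does not arise.

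Second, your claim that ``density of range is immediate from $\mathcal S\subset\mathrm{Range}(R(\mu))$'' is incorrect. For $f\in\mathcal S$ one would need $g\in C_\infty$ with $R(\mu)g=f$, i.e.\ $g=(\mu-\Delta+\sigma\cdot\nabla)f$, but $\sigma\cdot\nabla f$ is in general a singular measure, not an element of $C_\infty$. The paper instead verifies the Trotter condition $\mu(\mu+\Lambda_{C_\infty}(v_k))^{-1}\to 1$ in $C_\infty$ as $\mu\uparrow\infty$ \emph{uniformly in $k$} (its condition $2^\circ$), by reducing, via the same factorization, to the uniform $L^p$-estimates on $\Omega_p$ and quoting \cite[Prop.~8]{Ki}. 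This is the genuine replacement for the density-of-range check, and you do not address it.
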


\begin{remark*}
If $\sigma \ll \mathcal L^d$, then the constraint on $\delta$ in Theorem \ref{cor1} can be relaxed, see \cite{Ki},
cf.~Remark I above.
\end{remark*}

\smallskip

\section{Approximating measures}
\label{approxsect}

\subsection{In Theorems \ref{cor0} and \ref{cor1}} Suppose 
$
\sigma = b \mathcal L^d + \nu,
$ where $b:\mathbb R^d \rightarrow \mathbb C^d$, $b \mathcal L^d \in \bar{\mathbf{F}}^{\scriptscriptstyle \frac{1}{2}}_{\delta_1}(\lambda)$, and $\nu \in \bar{\mathbf{K}}_{\delta_2}^{d+1}(\lambda).$
The following statement is a part of Theorems  \ref{cor0} and \ref{cor1}.

\begin{lemma}
\label{approx_lem}
There exist vector fields
$v_k \in C_0^\infty(\mathbb R^d, \mathbb C^d)$, $k=1,2,\dots$ such that

{\rm(1)} $v_k\mathcal L^d \in  \bar{\mathbf{F}}^{\scriptscriptstyle \frac{1}{2}}_{\delta}(\lambda)$, $\sqrt{\delta}:=\sqrt{\delta_1}+\sqrt{\delta_2}$, for every $k$, and

{\rm(2)} $
v_k \mathcal L^d \overset{w}{\longrightarrow} \sigma
$ as $k \uparrow \infty$.
\smallskip
\end{lemma}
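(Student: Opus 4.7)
The plan is to set $v_k := b_k + c_k$ with $b_k := \rho_k(\eta_{\varepsilon_k} * b)$ and $c_k := \rho_k(\eta_{\varepsilon_k} * \nu)$, where $\{\eta_\varepsilon\}$ is a standard smooth symmetric nonnegative probability mollifier, $\varepsilon_k \downarrow 0$, and $\rho_k \in C_0^\infty(\mathbb R^d)$ satisfies $0 \leq \rho_k \leq 1$, $\rho_k \equiv 1$ on $\{|x|\leq k\}$, and $\rho_k \equiv 0$ outside $\{|x|\leq k+1\}$. Then $v_k \in C_0^\infty(\mathbb R^d, \mathbb C^d)$. For the weak convergence $v_k \mathcal L^d \overset{w}{\rightarrow} \sigma$, for any $\phi \in C_0^\infty(\mathbb R^d)$, Fubini and the symmetry of $\eta_{\varepsilon_k}$ give $\int \phi v_k\, dx = \int \eta_{\varepsilon_k}*(\phi\rho_k)\, d\sigma$; since $\eta_{\varepsilon_k}*(\phi\rho_k) \to \phi$ uniformly with supports contained in a fixed compact neighborhood of $\supp\phi$ for $k$ large, and $|\sigma|$ is locally finite, dominated convergence closes the argument.

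For the Kato part $c_k$, the estimate $|c_k|\leq \eta_{\varepsilon_k}*|\nu|$ and the commutation of $(\lambda-\Delta)^{-\frac{1}{2}}$ with convolution yield
\[
\sup_x \int (\lambda-\Delta)^{-\frac{1}{2}}(x,y)|c_k|(y)\,dy \leq \bigl\|\eta_{\varepsilon_k}*\bigl((\lambda-\Delta)^{-\frac{1}{2}}|\nu|\bigr)\bigr\|_\infty \leq \|(\lambda-\Delta)^{-\frac{1}{2}}|\nu|\|_\infty \leq \delta_2,
\]
so $c_k\mathcal L^d \in \bar{\mathbf K}_{\delta_2}^{d+1}(\lambda)$. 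Viewing $T f := Af$ as an operator $L^2(\mathbb R^d, dx)\to L^2(|c_k|dx)$, $A := (\lambda-\Delta)^{-\frac{1}{4}}$, the composition $TT^*$ on $L^2(|c_k|dx)$ has integral kernel $(\lambda-\Delta)^{-\frac{1}{2}}(x,x')$ by $A^2 = (\lambda-\Delta)^{-\frac{1}{2}}$; Schur's test then gives $\|T\|^2 = \|TT^*\| \leq \sup_x\int (\lambda-\Delta)^{-\frac{1}{2}}(x,x')|c_k|(x')dx' \leq \delta_2$, whence $c_k\mathcal L^d \in \bar{\mathbf F}^{\scriptscriptstyle \frac{1}{2}}_{\delta_2}(\lambda)$. (This simultaneously proves the general inclusion $\bar{\mathbf K}_{\delta}^{d+1}\subset \bar{\mathbf F}^{\scriptscriptstyle \frac{1}{2}}_{\delta}$ asserted in the introduction.)

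The key step is the bound for $b_k$, which I would deduce from the translation-invariance of $A$. With $(\tau_x f)(y):=f(y-x)$, one has $A\tau_x = \tau_x A$, so $(A\tau_{-x}f)^2(z) = (Af)^2(z+x)$. Using $|b_k|\leq \eta_{\varepsilon_k}*|b|$, Fubini, and the substitution $y=z+x$,
\[
\int (Af)^2(y)(\eta_{\varepsilon_k}*|b|)(y)\,dy = \int \eta_{\varepsilon_k}(x)\left(\int (A\tau_{-x}f)^2(z)\,|b|(z)\,dz\right) dx \leq \delta_1 \|f\|_2^2,
\]
the inner integral being $\leq \delta_1\|\tau_{-x}f\|_2^2 = \delta_1\|f\|_2^2$ by the form bound of $b\mathcal L^d$ applied to $\tau_{-x}f$ and the isometry of translation on $L^2$, after which $\int \eta_{\varepsilon_k}=1$ finishes. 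Hence $b_k\mathcal L^d \in \bar{\mathbf F}^{\scriptscriptstyle \frac{1}{2}}_{\delta_1}(\lambda)$. Combining via $|v_k|\leq|b_k|+|c_k|$,
\[
\|Af\|^2_{L^2(|v_k|dx)} \leq \|Af\|^2_{L^2(|b_k|dx)} + \|Af\|^2_{L^2(|c_k|dx)} \leq (\delta_1+\delta_2)\|f\|_2^2 \leq \delta\|f\|_2^2
\]
since $\delta=(\sqrt{\delta_1}+\sqrt{\delta_2})^2\geq \delta_1+\delta_2$. The main obstacle — preservation of the weakly form bound under smoothing — is thus dispatched cleanly by averaging the translation-invariant form bound against the mollifier kernel.
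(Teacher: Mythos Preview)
Your proof is correct and follows essentially the same approach as the paper: smooth by convolution with a nonnegative probability kernel (you use a general mollifier $\eta_{\varepsilon_k}$, the paper uses the heat kernel $e^{\varepsilon_k\Delta}$), cut off by $\rho_k$, and transport the bounds on $b$ and $\nu$ to $b_k$ and $c_k$ via the commutation of the smoothing operator with $(\lambda-\Delta)^{-s}$. The only visible differences are cosmetic: where the paper writes ``by interpolation'' for $\bar{\mathbf K}_{\delta_2}^{d+1}\subset\bar{\mathbf F}^{\scriptscriptstyle 1/2}_{\delta_2}$ you spell out the $TT^*$/Schur argument, and where the paper writes ``a similar argument'' for $b_k$ you make the translation-averaging explicit --- your writeup is in fact more self-contained on both points.
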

\begin{proof}
We fix functions
$\rho_k \in C_0^\infty$, $0 \leqslant \rho_k \leqslant 1$, $\rho \equiv 1$ in $\{|x| \leq k\}$, $\rho \equiv 0$ in $\{|x| \geq k+1\}$, 
and define
$$
v_k \mathcal L^d:=b_k \mathcal L^d + \nu_k,
$$
where, for some fixed $\varepsilon_k \downarrow 0$,
$$
\nu_k:=\rho_k e^{\varepsilon_k \Delta} \nu, \quad b_k:=\rho_k e^{\varepsilon_k \Delta} b. 
$$
It is clear that 
$v_k \in C_0^\infty(\mathbb R^d,\mathbb R^d)$ and  $v_k \mathcal L^d  \overset{w}{\longrightarrow}  \sigma$ as $k \uparrow \infty$. 
Let us show that $\nu_k \in \bar{\mathbf{K}}^{d+1}_{\delta_2}(\lambda)$ for every $k$. 
Indeed, we have the following pointwise (a.e.) estimates on $\mathbb R^d$:
\begin{align*}
(\lambda-\Delta)^{-\frac{1}{2}} |\nu_k| \leqslant  (\lambda-\Delta)^{-\frac{1}{2}} |e^{\varepsilon_k\Delta}\nu| \leqslant (\lambda-\Delta)^{-\frac{1}{2}} e^{\varepsilon_k\Delta}|\nu| = 
e^{\varepsilon_k\Delta}(\lambda-\Delta)^{-\frac{1}{2}} |\nu|.
\end{align*}
Since
$
\|e^{\varepsilon_k\Delta}(\lambda-\Delta)^{-\frac{1}{2}} |\nu|\|_\infty \leqslant \|(\lambda-\Delta)^{-\frac{1}{2}} |\nu|\|_\infty
$ and, in turn,
$\|(\lambda-\Delta)^{-\frac{1}{2}} |\nu|\|_\infty \leqslant \delta_2$ ($\Leftrightarrow \nu \in \bar{\mathbf{K}}^{d+1}_{\delta_2}(\lambda)$),
we 
have $\nu_k \in \bar{\mathbf{K}}^{d+1}_{\delta_2}(\lambda)$. 
By interpolation, $\nu_k\in \bar{\mathbf{F}}^{\scriptscriptstyle \frac{1}{2}}_{\delta_1}(\lambda)$. 
A similar argument yields $b_k \mathcal L^d \in \bar{\mathbf{F}}^{\scriptscriptstyle \frac{1}{2}}_{\delta_1}(\lambda)$. Thus, $v_k\mathcal L^d \in  \bar{\mathbf{F}}^{\scriptscriptstyle \frac{1}{2}}_{\delta}(\lambda)$, for every $k$.
\end{proof}

\subsection{In Corollary \ref{cor3}}
Suppose
$
\Psi \in \bar{\mathbf{F}}_\delta(\Delta,\lambda).
$
Select $\rho_k \in C_0^\infty$, $0 \leqslant \rho_k \leqslant 1$, $\rho \equiv 1$ in $\{|x| \leq k\}$, $\rho \equiv 0$ in $\{|x| \geq k+1\}$. Fix some $\varepsilon_k \downarrow 0$.

\begin{lemma}
\label{approx_lem2}

We have $V_k:=\rho_k e^{\varepsilon_k \Delta} \Psi   \in C_0^\infty(\mathbb R^d)$, and

{\rm(1)} $V_k\mathcal L^d \in \bar{\mathbf{F}}_\delta(\Delta,\lambda)$ for every $k$, 

{\rm(2)} $V_k\mathcal L^d \overset{w}{\rightarrow} \Psi$ as $k \uparrow \infty$.

\end{lemma}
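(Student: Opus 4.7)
I plan to establish Lemma~\ref{approx_lem2} by exploiting the positivity of the heat semigroup together with a Cauchy--Schwarz trick that preserves the quadratic-form bound through the smoothing. That $V_k\in C_0^\infty$ is routine: applying the form bound to a unit bump centered at $x$ yields $|\Psi|(B(x,1/2))\leq C$ uniformly in $x$, hence the polynomial growth $|\Psi|(B(0,L))\leq CL^d$; combined with the Gaussian decay of the heat kernel, this makes $e^{\varepsilon_k\Delta}\Psi$ a well-defined smooth function, which becomes compactly supported after multiplication by $\rho_k$.

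For part (1), I would use the pointwise bound $|V_k|\leq \rho_k(e^{\varepsilon_k\Delta}|\Psi|)\leq e^{\varepsilon_k\Delta}|\Psi|$ and, setting $u:=(\lambda-\Delta)^{-1/2}f$ and applying Fubini, reduce the target inequality to
\[
\int e^{\varepsilon_k\Delta}(u^2)\,d|\Psi|\leq \delta\,\|(\lambda-\Delta)^{1/2}u\|_2^2.
\]
I then apply the form bound for $|\Psi|$ to the smooth non-negative function $g:=(e^{\varepsilon_k\Delta}(u^2))^{1/2}$, which reduces the matter to showing
\[
\lambda\|g\|_2^2+\|\nabla g\|_2^2\leq \lambda\|u\|_2^2+\|\nabla u\|_2^2.
\]
The $L^2$-part is exact: $\|g\|_2^2=\int e^{\varepsilon_k\Delta}(u^2)\,dx=\|u\|_2^2$. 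The gradient part follows from the pointwise identity (valid pointwise away from $u\equiv 0$, where the estimate is trivial)
\[
|\nabla g|^2 = \frac{|e^{\varepsilon_k\Delta}(u\nabla u)|^2}{e^{\varepsilon_k\Delta}(u^2)}\leq e^{\varepsilon_k\Delta}(|\nabla u|^2),
\]
the last inequality being Cauchy--Schwarz applied inside the probability measure $p_{\varepsilon_k}(x-y)\,dy$; integration then yields $\|\nabla g\|_2^2\leq \|\nabla u\|_2^2$.

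For part (2), I fix $\phi\in C_c^\infty$ with $\supp\phi\subset B(0,R)$; for $k\geq R$ the cutoff $\rho_k$ is identically $1$ on $\supp\phi$, and Fubini rewrites $\int\phi V_k\,dx = \int e^{\varepsilon_k\Delta}\phi\,d\Psi$. I split this integral into contributions from $B(0,2R)$ and its complement. On $B(0,2R)$, $e^{\varepsilon_k\Delta}\phi\to\phi$ uniformly and $|\Psi|(B(0,2R))<\infty$, yielding the limit $\int\phi\,d\Psi$. Outside $B(0,2R)$ one has $\phi\equiv 0$, and the Gaussian tail estimate $|e^{\varepsilon_k\Delta}\phi(y)|\leq C\|\phi\|_\infty\varepsilon_k^{-d/2}e^{-|y|^2/(16\varepsilon_k)}$ for $|y|>2R$, combined with the polynomial growth $|\Psi|(B(0,L))\leq CL^d$ summed over dyadic annuli, gives a tail contribution of order $\varepsilon_k^{-d/2}e^{-cR^2/\varepsilon_k}\to 0$.

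The main (though short) obstacle is the pointwise gradient estimate for $g$: it is precisely this Cauchy--Schwarz identity that transmits the form bound through the nonlinearity $u\mapsto(e^{\varepsilon_k\Delta}(u^2))^{1/2}$ with the \emph{same} constant $\delta$, which is what claim (1) demands. A naive estimate that first smooths $|\Psi|$ by $e^{\varepsilon_k\Delta}$ and then tries to bound the smoothed measure's form-bound directly would enlarge $\delta$, defeating the purpose of the approximation scheme.
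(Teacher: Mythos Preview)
Your argument is correct and coincides with the paper's proof: both use $|V_k|\leq e^{\varepsilon_k\Delta}|\Psi|$, transfer the heat semigroup onto $\varphi^2$ by Fubini, apply the form bound of $|\Psi|$ to $g=(e^{\varepsilon_k\Delta}(\varphi^2))^{1/2}$, and close with the mass-conservation identity $\|g\|_2=\|\varphi\|_2$ together with the Cauchy--Schwarz (in the paper, ``H\"older'') estimate $|\nabla g|^2=(e^{\varepsilon_k\Delta}(\varphi^2))^{-1}|e^{\varepsilon_k\Delta}(\varphi\nabla\varphi)|^2\leq e^{\varepsilon_k\Delta}|\nabla\varphi|^2$. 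Your treatment of (2) and of $V_k\in C_0^\infty$ is more detailed than the paper's (which declares (2) ``immediate''), but the substance is the same.
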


\begin{proof}
Assertion (2) is immediate. Let us prove (1).
It is clear that $V_k\mathcal L^d \in \bar{\mathbf{F}}_{\delta}\bigl(\Delta,\lambda\bigr)$
if and only if 
$$\langle |V_k|\varphi,\varphi \rangle \leqslant \delta \langle (\lambda-\Delta)^{\frac{1}{2}}\varphi,(\lambda-\Delta)^{\frac{1}{2}}\varphi \rangle, \qquad \varphi \in \mathcal S.$$
We have
$
|V_k|=\rho_k e^{\varepsilon_k\Delta}|\Psi| \leqslant e^{\varepsilon_k\Delta}|\Psi|, 
$
so
\begin{align*}
\langle|V_k|\varphi,\varphi \rangle & \leqslant \langle e^{\varepsilon_k\Delta}|\Psi| \varphi,\varphi \rangle
= \langle |\Psi|, e^{\varepsilon_k\Delta}(\varphi^2) \rangle \qquad \biggl(\text{since $\Psi \in \bar{\mathbf{F}}_\delta(\Delta)$}\biggr) \\
& \leqslant \delta \left\langle  \biggl((\lambda-\Delta)^{\frac{1}{2}}[e^{\varepsilon_k\Delta}(\varphi^2)]^{\frac{1}{2}} \biggr)^2 \right\rangle = \delta \left\langle  (\lambda-\Delta)[e^{\varepsilon_k\Delta}(\varphi^2)]^{\frac{1}{2}}, [e^{\varepsilon_k\Delta}(\varphi^2)]^{\frac{1}{2}} \right\rangle \\
& = \delta \langle e^{\varepsilon_k\Delta}\varphi^2\rangle + \delta \langle \nabla[e^{\varepsilon_k\Delta}(\varphi^2)]^{\frac{1}{2}}, \nabla[e^{\varepsilon_k\Delta}(\varphi^2)]^{\frac{1}{2}} \rangle \qquad \biggl (\text{we are using }\langle e^{\varepsilon_k\Delta}\varphi^2\rangle = \langle \varphi^2\rangle \biggr) \\
& =\delta \langle \varphi^2\rangle + \delta \langle (e^{\varepsilon_k\Delta}\varphi^2)^{-1} (e^{\varepsilon_k\Delta} \varphi\nabla \varphi)^2\rangle \qquad \biggl(\text{by H\"{o}lder inequality}\biggr) \\
& \leqslant  \delta \langle \varphi^2\rangle + \delta \langle e^{\varepsilon_k\Delta}(\nabla \varphi)^2\rangle = \langle (\lambda-\Delta)^{\frac{1}{2}}\varphi,(\lambda-\Delta)^{\frac{1}{2}}\varphi \rangle,
\end{align*}
as needed.
\end{proof}

\section{Proof of Theorem \ref{cor0}} 



\subsection*{Preliminaries}

\textbf{1.~}By Lemma \ref{approx_lem}, there exist vector fields
$v_k \in C_0^\infty(\mathbb R^d, \mathbb C^d)$, $k=1,2,\dots$, such that
$v_k\mathcal L^d \in  \bar{\mathbf{F}}^{\scriptscriptstyle \frac{1}{2}}_{\delta}(\lambda),$ $ \sqrt{\delta}:=\sqrt{\delta_1}+\sqrt{\delta_2},$
and
$
v_k \mathcal L^d \overset{w}{\longrightarrow} \sigma$ $\text{as } k \uparrow \infty.
$

\smallskip

\textbf{2.~}Due to the strict inequality $m_{d}\delta<1$, we may assume that the infimum $m_{d}$ (cf.~\eqref{m_d}) is attained, i.e.~there is $\kappa_{d}>0$
$$
|\nabla (\zeta-\Delta)^{-1}(x,y)| \leqslant m_{d} \biggl(\kappa_{d}^{-1}\Real\zeta-\Delta\biggr)^{-\frac{1}{2}}(x,y), \quad \text{ $x,y \in \mathbb R^d$, $ x \neq y$, $\Real\zeta>0$}.
$$

%

%
%
%
%
\textbf{3.~}Set
$
\mathcal O:=\{\zeta \in \mathbb C: \Real\,\zeta \geqslant \kappa_{d}\lambda_{\delta}\},
$

\subsection*{The method of proof}
We modify the method of \cite{Ki}.
Fix some $p \in \mathcal J$,
and some $r,q$ satisfying $ 1 \leqslant r <\min\{2,p\} \leqslant \max\{2,p\} < q.$
Our starting object is an operator-valued function
\begin{equation*}
\Theta_p(\zeta,\sigma):=(\zeta-\Delta)^{-\frac{1}{2}-\frac{1}{2q}} \Omega_p(\zeta,\sigma,q,r)(\zeta-\Delta)^{-\frac{1}{2r'}} \in \mathcal B(L^p), \quad \zeta \in \mathcal O,
\end{equation*}
which is `a candidate' for the resolvent of the desired operator realization $\Lambda_p(\sigma)$ of $-\Delta + \sigma \cdot \nabla$ on $L^p$.
Here
\begin{equation}
\label{omega_p_def}
\Omega_p(\zeta,\sigma,q,r):=\biggl(\Omega_2(\zeta,\sigma,q,r)\biggl|_{L^p \cap L^2}\biggr)_{L^p}^{\clos} \in \mathcal B(L^p),
\end{equation}
where, on $L^2$,
\begin{equation*}
\Omega_2(\zeta,\sigma,q,r):=(\zeta-\Delta)^{-\frac{1}{2}\bigl(\frac{1}{2}-\frac{1}{q} \bigr)}(1+Z_2(\zeta,\sigma))^{-1}(\zeta-\Delta)^{-\frac{1}{2}\bigl(\frac{1}{2}-\frac{1}{r'} \bigr)} \in \mathcal B(L^2),
\end{equation*}
\begin{align*}
Z_2(\zeta,\sigma) &h(x):=(\zeta-\Delta)^{-\frac{1}{4}} \sigma \cdot \nabla(\zeta-\Delta)^{-\frac{3}{4}} h(x) \\
=&
\int_{\mathbb R^d}  (\zeta-\Delta)^{-\frac{1}{4}}(x,y) \biggl( \int_{\mathbb R^d} \nabla (\zeta-\Delta)^{-\frac{3}{4}}(y,z) h(z) dz  \biggr) \cdot \sigma(y)dy, \quad x \in \mathbb R^d, \quad h \in \mathcal S,
\end{align*}
and $\|Z_2\|_{2 \rightarrow 2}  <1$, so $\Omega_2(\zeta,\sigma,q,r) \in \mathcal B(L^2)$, see Proposition \ref{prop2} below. We prove that $\Omega_p(\zeta,\sigma,q,r) \in \mathcal B(L^p)$ in Proposition \ref{lem50} below.


\medskip

We show that $\Theta_p(\zeta,\sigma)$ is  the resolvent of $\Lambda_p(\sigma)$ (assertion (\textit{i}) of Theorem \ref{cor0}) by verifying conditions of the Trotter approximation theorem:

1) $\Theta_{p}(\zeta,v_k \mathcal L^d)=(\zeta+\Lambda_p(v_k\mathcal L^d))^{-1}$, $\zeta \in \mathcal O$, where $\Lambda_{p}(v_k \mathcal L^d):=-\Delta + v_k \cdot \nabla$, $D(\Lambda_p(v_k \mathcal L^d))=W^{2,p}.$

2) $\sup_{n \geqslant 1}\|\Theta_{p}(\zeta,v_k \mathcal L^d)\|_{p \rightarrow p} \leqslant C_p|\zeta|^{-1}$, $\zeta \in \mathcal O$. 

3) 
$\mu \Theta_{p}(\zeta,v_k \mathcal L^d) \overset{s}{\rightarrow} 1 \text{ in $L^p$ as } \mu\uparrow \infty \text{ uniformly in $k$}.$

4) $\Theta_{p} (\zeta,v_k \mathcal L^d) \overset{s}{\rightarrow} \Theta_{p} (\zeta,\sigma)$ in $L^p$ for some $\zeta \in \mathcal O$ as $k \uparrow \infty$ (possibly after replacing $v_k\mathcal L^d$'s with a sequence of their convex combinations, also weakly converging to measure $\sigma$), see Propositions \ref{prop_first} - \ref{lem50_p} below for details.

We note that a priori in 1) the set of $\zeta$'s for which $\Theta_{p}(\zeta,v_k \mathcal L^d)=(\zeta+\Lambda_p(v_k\mathcal L^d))^{-1}$ may depend on $k$; the fact that it actually does not is the content of Proposition \ref{last_prop}. 

The proofs of 2), 3), contained in Proposition \ref{prop_first} and \ref{conv_prop}, are based on an explicit representation of $\Omega_p(\zeta,v_k\mathcal L^d,q,r)$, $k=1,2,\dots$, see formula \eqref{alt_repr} below. (The representation \eqref{alt_repr}
doesn't exist if $\sigma$ has a non-zero singular part; we have to take a detour via $L^2$,
(cf.~\eqref{omega_p_def}), 
 which requires us to put somewhat more restrictive assumptions on $\delta$ (compared to \cite{Ki}, where the case of a $\sigma$ 
 having zero singular part is treated).)

Next, 4) follows from $\Theta_{2} (\zeta,v_k \mathcal L^d) \overset{s}{\rightarrow} \Theta_{2} (\zeta,\sigma)$, combined with $\sup_{n} \|\Theta_{p}(\zeta,v_k \mathcal L^d)\|_{2(p-1) \rightarrow 2(p-1)}<\infty$ ($\Leftarrow 2)$) and H\"{o}lder inequality, see Proposition \ref{lem50_p}. Our proof of $\Theta_{2} (\zeta,v_k \mathcal L^d) \overset{s}{\rightarrow} \Theta_{2} (\zeta,\sigma)$ (Proposition \ref{lem5}) uses the Kato-Ponce inequality by \cite{GO}.

\smallskip

Finally, we note that the very definition of the operator-valued function $\Theta_p(\zeta,\sigma)$ ensures smoothing properties $\Theta_p(\zeta,\sigma) \in \mathcal B\biggl(\mathcal W^{-\frac{1}{r'},p},\mathcal W^{1+\frac{1}{q},p}\biggr)$ $\Rightarrow$ assertion (\textit{ii}). Assertion (\textit{iii}) is immediate from (\textit{ii}).


%
%
%
%

\bigskip

Now, we proceed to formulating and proving Propositions \ref{prop2} - \ref{lem50_p}.

\begin{proposition}
\label{prop2} We have for every $\zeta \in \mathcal O$ \

\smallskip

{\rm(1)} $\|Z_2(\zeta,v_k \mathcal L^d)\|_{2 \rightarrow 2} \leqslant \delta$ for all $k$.

\smallskip

{\rm(2)} $\|Z_2(\zeta,\sigma)f\|_{2} \leqslant \delta\|f\|_2$, for all $f \in \mathcal S$, all $k$.

\end{proposition}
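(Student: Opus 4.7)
The goal is a single sesquilinear estimate on $\mathcal S \times \mathcal S$ that simultaneously gives both parts: for any $\tau \in \bar{\mathbf{F}}^{\scriptscriptstyle \frac{1}{2}}_{\delta}(\lambda)$ and $h, g \in \mathcal S$, I will establish
\begin{equation*}
|\langle Z_2(\zeta, \tau) h, g\rangle| \leqslant \delta \|h\|_2 \|g\|_2, \quad \zeta \in \mathcal O.
\end{equation*}
Part (2) is this estimate with $\tau = \sigma$; for part (1) I apply it with $\tau = v_k\mathcal L^d$, which by Lemma \ref{approx_lem} lies in the same class with the same constant, and then use density of $\mathcal S$ in $L^2$ to upgrade to the operator norm bound.

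\textbf{Setup and splitting.} By Fubini, the symmetry of the resolvent kernel, and the conjugation identity $\overline{(\zeta-\Delta)^{-1/4}(x,y)} = (\bar\zeta-\Delta)^{-1/4}(x,y)$, the pairing rewrites as
\begin{equation*}
\langle Z_2(\zeta, \tau) h, g\rangle = \int_{\mathbb R^d} \overline{(\bar\zeta-\Delta)^{-\frac{1}{4}}g(y)}\,\nabla(\zeta-\Delta)^{-\frac{3}{4}}h(y)\cdot\tau(dy).
\end{equation*}
Standard estimates for vector-valued integrals against complex measures (using $|\tau_j|\leqslant|\tau|$ and $|F_j|\leqslant|F|$ for $F:\mathbb R^d\to\mathbb C^d$), followed by Cauchy--Schwarz against $|\tau|$, bound the modulus by $A^{1/2}B^{1/2}$ where $A := \int|(\bar\zeta-\Delta)^{-1/4}g|^2\,d|\tau|$ and $B := \int|\nabla(\zeta-\Delta)^{-3/4}h|^2\,d|\tau|$.

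\textbf{Applying the form-bound on both sides.} For $A$: the subordination formula yields $|(\zeta-\Delta)^{-s}(x,y)|\leqslant(\Real\zeta-\Delta)^{-s}(x,y)$ for $s>0$, so for $\Real\zeta\geqslant\lambda$ (ensured by $\zeta\in\mathcal O$, after enlarging $\lambda_\delta$ if necessary --- which preserves the form-bound) one has $|(\bar\zeta-\Delta)^{-1/4}g|\leqslant(\lambda-\Delta)^{-1/4}|g|$ pointwise, and the definition of $\bar{\mathbf{F}}^{\scriptscriptstyle \frac{1}{2}}_{\delta}(\lambda)$ gives $A\leqslant\delta\|g\|_2^2$. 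For $B$: the key step is the factorization $\nabla(\zeta-\Delta)^{-3/4}=(\zeta-\Delta)^{-1/4}\nabla(\zeta-\Delta)^{-1/2}$ (commuting Fourier multipliers). Setting $u:=\nabla(\zeta-\Delta)^{-1/2}h$, Plancherel gives $\|u\|_2\leqslant\|h\|_2$ since $|\xi|\,|\zeta+|\xi|^2|^{-1/2}\leqslant 1$ for $\Real\zeta\geqslant 0$. Applying the kernel domination componentwise to $(\zeta-\Delta)^{-1/4}u_j$ and then the form-bound to each $u_j$,
\begin{equation*}
B=\sum_{j=1}^{d}\int|(\zeta-\Delta)^{-\frac{1}{4}}u_j|^2\,d|\tau|\leqslant\delta\sum_{j=1}^{d}\|u_j\|_2^2=\delta\|u\|_2^2\leqslant\delta\|h\|_2^2.
\end{equation*}
Combining, $|\langle Z_2 h, g\rangle|\leqslant\delta\|h\|_2\|g\|_2$.

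\textbf{Main obstacle.} The only real trick is the factorization of $\nabla(\zeta-\Delta)^{-3/4}$, which symmetrizes the argument: the factor $(\zeta-\Delta)^{-1/4}$ is absorbed pointwise into the form-bound on the $h$-side, while the residual vector-valued Riesz multiplier $\nabla(\zeta-\Delta)^{-1/2}$ is bounded on $L^2$. Without this splitting the weakly form-bounded hypothesis, which naturally pairs $(\lambda-\Delta)^{-1/4}$ with $|\tau|$, does not apply on both sides of $\tau$.
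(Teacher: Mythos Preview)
Your argument is correct, and for part (1) it is essentially a sesquilinear reformulation of the paper's factorization $Z_2 = H^\ast S$ with $H=|v_k|^{1/2}(\zeta-\Delta)^{-1/4}$ and $S=v_k^{1/2}\cdot\nabla(\zeta-\Delta)^{-3/4}$: your Cauchy--Schwarz split $A^{1/2}B^{1/2}$ is precisely $\|H\|\,\|S\|$ in disguise, and your factorization $\nabla(\zeta-\Delta)^{-3/4}=(\zeta-\Delta)^{-1/4}\nabla(\zeta-\Delta)^{-1/2}$ is exactly how the paper estimates $\|S\|$.

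The genuine difference is in part (2). The paper does \emph{not} estimate $Z_2(\zeta,\sigma)$ directly from the form-bound; instead it writes $\langle g, Z_2(\zeta,\sigma)f\rangle = \lim_k \langle g, Z_2(\zeta,v_k\mathcal L^d)f\rangle$ via the weak convergence $v_k\mathcal L^d \overset{w}{\rightarrow}\sigma$ on test functions, and then invokes part (1). You instead work directly with the measure $\tau=\sigma$, exploiting that the weak form-bound is already phrased as an $L^2(|\tau|)$ inequality. Your route is more self-contained (it does not need the approximating sequence at all for this proposition) and shows that (2) holds for \emph{any} $\tau\in\bar{\mathbf F}^{1/2}_\delta$, not only those admitting such an approximation. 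The paper's route has the virtue of reducing everything to the absolutely continuous case where the operator factorization $H^\ast S$ is literally available; this is consistent with the paper's overall strategy, which relies on the approximation throughout. One minor point: your application of the form-bound to $|g|$ and to $u_j$ (neither of which need lie in $\mathcal S$) requires the routine extension of the defining inequality from $\mathcal S$ to nonnegative $L^2$ functions by monotone approximation, which you should mention.
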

\begin{proof}
(1) Define $H:=|v_k|^{\frac{1}{2}}(\zeta-\Delta)^{-\frac{1}{4}}$, $ S:=v_k^{\frac{1}{2}}\nabla(\zeta-\Delta)^{-\frac{3}{4}}$ where $v_k^{\frac{1}{2}}:=|v_k|^{-\frac{1}{2}}v_k$. Then $
Z_2(\zeta,v_k \mathcal L^d) = H^{\ast}S,$
and we have
$$
\|Z_2(\zeta,v_k \mathcal L^d)\|_{2 \rightarrow 2} \leqslant \|H\|_{2 \rightarrow 2} \|S\|_{2 \rightarrow 2} \leqslant \|H\|_{2 \rightarrow 2} ^2 \|\nabla (\zeta-\Delta)^{-\frac{1}{2}}\|_{2 \rightarrow 2} \leqslant \delta,
$$
where $\|\nabla (\zeta-\Delta)^{-\frac{1}{2}}\|_{2 \rightarrow 2}=1$, and $\|H\|_{2 \rightarrow 2 }^2 \leqslant \delta$ (cf.~Lemma \ref{approx_lem}(1)).
\smallskip

(2) We have, for every $f$, $g \in \mathcal S$,
\begin{align*}
\bigl\langle g, Z_2(\zeta,\sigma)f \bigr\rangle = & \bigl\langle (\zeta-\Delta)^{-\frac{1}{4}}g, \sigma \cdot \nabla (\zeta-\Delta)^{-\frac{3}{4}}f \bigr\rangle \\
& \text{(here we are using $v_k \mathcal L^d \overset{w}{\rightarrow} \sigma$)} \\
& =
\lim_k \bigl\langle (\zeta-\Delta)^{-\frac{1}{4}}g, v_k \cdot \nabla (\zeta-\Delta)^{-\frac{3}{4}}f \bigr\rangle \\
& \text{(here we are using assertion (1))} \\
& \leqslant \delta \|g\|_2 \|f\|_2,
\end{align*}
i.e.~$\|Z_2(\zeta,\sigma)f\|_2 \leqslant \delta \|f\|_2$, as needed.
\end{proof}


The natural extension of $Z_2(\zeta,\sigma)|_{\mathcal S}$ (by continuity) to $\mathcal B(L^2)$ will be denoted again by $Z_2(\zeta,\sigma)$.
Since $\|Z_2(\zeta,v_k \mathcal L^d)\|_{2 \rightarrow 2}, \|Z_2(\zeta,\sigma)\|_{2 \rightarrow 2} \leqslant \delta<1$, we have $ \Omega_2(\zeta,v_k\mathcal L^d,q,r),  \Omega_2(\zeta,\sigma,q,r) \in \mathcal B(L^2)$. 

\medskip

%
%

Set 
$$\mathcal I:=\left(\frac{2}{1 + \sqrt{1-m_{d} \delta}}, \frac{2}{1 - \sqrt{1-m_{d} \delta}}\right).$$
In the next few propositions, given a $p \in \mathcal I$, we assume
$r,q$ satisfy $ 1 \leqslant r <\min\{2,p\} \leqslant \max\{2,p\} < q.$


\smallskip

The following proposition plays a principal role:

\begin{proposition}
\label{prop_first}
Let $p \in \mathcal I$. There exist constants $C_{p}$, $C_{p,q,r}<\infty$ such that for every $\zeta \in \mathcal O$

\smallskip
{\rm(1) }$
\|\Omega_p(\zeta,v_k\mathcal L^d,q,r)\|_{p \rightarrow p} \leqslant C_{p,q,r}$ for all $k$,

\smallskip

{\rm(2)} $ \|\Omega_p(\zeta,v_k\mathcal L^d,\infty,1)\|_{p \rightarrow p} \leqslant C_{p}|\zeta|^{-\frac{1}{2}}
$ for all $k$.

\end{proposition}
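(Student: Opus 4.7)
My plan is to derive, for the smooth compactly supported $v_k$, an explicit $L^p$-representation of $\Omega_p(\zeta,v_k\mathcal L^d,q,r)$, which I expect to be the formula \eqref{alt_repr} referred to in the method-of-proof discussion. Factoring $v_k \cdot \nabla = |v_k|^{\frac{1}{p'}} v_k^{\frac{1}{p}} \cdot \nabla$ and writing $Z_2 = AB$ with $A := (\zeta-\Delta)^{-\frac{1}{4}} |v_k|^{\frac{1}{p'}}$ and $B := v_k^{\frac{1}{p}} \cdot \nabla (\zeta-\Delta)^{-\frac{3}{4}}$, the algebraic identity $(1+AB)^{-1} = 1 - A(1+BA)^{-1} B$, combined with the key observation that $BA = v_k^{\frac{1}{p}} \cdot \nabla (\zeta-\Delta)^{-1} |v_k|^{\frac{1}{p'}} =: R_p$, yields after consolidating fractional powers
\begin{equation*}
\Omega_p(\zeta, v_k \mathcal L^d, q, r) = (\zeta-\Delta)^{-\frac{1}{2} + \frac{1}{2q} + \frac{1}{2r'}} - (\zeta-\Delta)^{-\frac{1}{2} + \frac{1}{2q}} |v_k|^{\frac{1}{p'}} (1 + R_p)^{-1} v_k^{\frac{1}{p}} \cdot \nabla (\zeta-\Delta)^{-1 + \frac{1}{2r'}}.
\end{equation*}
Since $v_k \in C_0^\infty$, every piece on the right is a classical bounded operator on $L^p$, so the identity (valid a priori on $L^p \cap L^2$) extends to the closure in \eqref{omega_p_def} by density.

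The crux is to show $\|R_p\|_{p \to p} < 1$, uniformly in $k$ and $\zeta \in \mathcal O$, for $p \in \mathcal I$. Using the pointwise bound $|\nabla(\zeta-\Delta)^{-1}(x,y)| \leqslant m_d (\mu-\Delta)^{-\frac{1}{2}}(x,y)$ with $\mu := \kappa_d^{-1}\Real\zeta$ (Preliminary~2 above), one has pointwise
\begin{equation*}
|R_p f|(x) \leqslant m_d |v_k|^{\frac{1}{p}}(\mu-\Delta)^{-\frac{1}{2}}\bigl(|v_k|^{\frac{1}{p'}}|f|\bigr)(x),
\end{equation*}
reducing the task to the $L^p$-estimate $\||v_k|^{\frac{1}{p}}(\mu-\Delta)^{-\frac{1}{2}}|v_k|^{\frac{1}{p'}}\|_{p \to p} \leqslant \delta \cdot \frac{pp'}{4}$. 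This would give $\|R_p\|_{p \to p} \leqslant m_d \delta \cdot \frac{pp'}{4}$, and an elementary algebraic manipulation shows that the interval $\mathcal I = \bigl(\frac{2}{1+\sqrt{1-m_d\delta}}, \frac{2}{1-\sqrt{1-m_d\delta}}\bigr)$ is \emph{precisely} the set of $p$ for which $m_d \delta \cdot \frac{pp'}{4} < 1$. The $L^p$-bound itself must be extracted from the sole $L^2$-hypothesis $\||v_k|^{\frac{1}{2}} (\mu-\Delta)^{-\frac{1}{4}}\|_{2 \to 2}^2 \leqslant \delta$ (weak form-boundedness, uniform in $k$ by Lemma \ref{approx_lem}) by complex interpolation of a suitable analytic family of operators based at the self-adjoint $L^2$-anchor $|v_k|^{\frac{1}{2}}(\mu-\Delta)^{-\frac{1}{2}}|v_k|^{\frac{1}{2}}$. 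I expect this interpolation step --- transporting the $L^2$-only weak form-bound to a sharp $L^p$-estimate uniformly in the approximating sequence --- to be the main technical obstacle, and it is the direct analogue for weakly form-bounded measures of the key bound in \cite{Ki}.

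Once $\|R_p\|_{p \to p} < 1$ is established, the Neumann series gives $\|(1+R_p)^{-1}\|_{p \to p} \leqslant (1 - \|R_p\|_{p \to p})^{-1}$ uniformly in $k$ and $\zeta$. For assertion~(1), the outer factors $(\zeta-\Delta)^{-\frac{1}{2} + \frac{1}{2q}} |v_k|^{\frac{1}{p'}}$ and $v_k^{\frac{1}{p}} \cdot \nabla (\zeta-\Delta)^{-1 + \frac{1}{2r'}}$ are bounded in $\mathcal B(L^p)$ uniformly in $k$ by the same pointwise-plus-interpolation scheme (splitting each into a bare fractional power of $(\zeta-\Delta)$ and a weighted piece controlled by the weak form-bound), while the first term $(\zeta-\Delta)^{-\frac{1}{2} + \frac{1}{2q} + \frac{1}{2r'}}$ is uniformly bounded on $\mathcal O$; multiplying gives the constant $C_{p,q,r}$. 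For assertion~(2), the specialization $q = \infty$, $r = 1$ makes $\frac{1}{2q} = \frac{1}{2r'} = 0$, so the first term reduces to $(\zeta-\Delta)^{-\frac{1}{2}}$ with norm $\leqslant C|\zeta|^{-\frac{1}{2}}$; in the second term, factoring $(\zeta-\Delta)^{-\frac{1}{2}} = (\zeta-\Delta)^{-\frac{1}{4}} (\zeta-\Delta)^{-\frac{1}{4}}$ and $\nabla(\zeta-\Delta)^{-1} = \nabla(\zeta-\Delta)^{-\frac{3}{4}} (\zeta-\Delta)^{-\frac{1}{4}}$ and peeling off the two outermost bare $(\zeta-\Delta)^{-\frac{1}{4}}$'s contributes $|\zeta|^{-\frac{1}{4}} \cdot |\zeta|^{-\frac{1}{4}} = |\zeta|^{-\frac{1}{2}}$, while the inner bracketed pieces $(\zeta-\Delta)^{-\frac{1}{4}}|v_k|^{\frac{1}{p'}}$, $(1+R_p)^{-1}$, and $v_k^{\frac{1}{p}} \cdot \nabla (\zeta-\Delta)^{-\frac{3}{4}}$ are bounded uniformly in $k$ and $\zeta$, yielding the required $C_p |\zeta|^{-\frac{1}{2}}$ bound.
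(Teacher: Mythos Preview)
Your proposal is correct and follows essentially the same route as the paper: derive the explicit $L^p$-representation \eqref{alt_repr} via the algebraic identity $(1+AB)^{-1}=1-A(1+BA)^{-1}B$, reduce to the bound $\|R_p\|_{p\to p}\leqslant \frac{pp'}{4}m_d\delta<1$ for $p\in\mathcal I$ (which the paper cites from \cite[Prop.~1(\textit{i})]{Ki} and you correctly attribute to pointwise-plus-interpolation from the $L^2$ weak form-bound), then bound the outer factors and extract the $|\zeta|^{-\frac{1}{2}}$ decay for $(q,r)=(\infty,1)$ exactly as in \cite[Prop.~1(\textit{ii})]{Ki}. Your expansion in fact records the free-resolvent term $(\zeta-\Delta)^{-\frac{1}{2}+\frac{1}{2q}+\frac{1}{2r'}}$ that the paper's shorthand ``$\tilde\Omega_p=\Omega_p$'' suppresses, but this is cosmetic and the arguments coincide.
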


\begin{proof}

Denote $v_k^{\frac{1}{p}}:=|v_k|^{\frac{1}{p}-1}v_k$.
Set:
\begin{equation}
\label{alt_repr}
\tilde{\Omega}_p(\zeta,v\mathcal L^d,q,r):=Q_p(q)(1+T_p)^{-1} G_p(r), \quad \zeta \in \mathcal O,
\end{equation}
where
$$
Q_p(q):=(\zeta-\Delta)^{-\frac{1}{2q'}}|v_k|^{\frac{1}{p'}}, \quad T_p:=v_k^{\frac{1}{p}} \cdot \nabla (\zeta-\Delta)^{-1}|v|^{\frac{1}{p'}}, \quad G_p(r):=v_k^{\frac{1}{p}}\cdot \nabla (\zeta-\Delta)^{-\frac{1}{2}-\frac{1}{2r}},
$$
are uniformly (in $k$) bounded in $\mathcal B(L^p)$, and, in particular, $\|T_p\|_{p \rightarrow p} \leqslant 
\frac{pp'}{4} m_{d} \delta$ (see the proof of \cite[Prop.~1(\textit{i})]{Ki}), and $\frac{pp'}{4} m_{d} \delta<1$ since $p \in \mathcal I$. It follows that $C_{p,q,r}:=\sup_k \|\tilde{\Omega}_p(\zeta,v\mathcal L^d,q,r)\|_{p \rightarrow p}<\infty$. 
Now, $\tilde{\Omega}_p|_{L^2 \cap L^p} = \Omega_2|_{L^2 \cap L^p}$ (by expanding  $(1+T_p)^{-1}$, $(1+Z_2)^{-1}$ in the K.~Neumann series in $L^p$ and in $L^2$, respectively). Therefore,
$\tilde{\Omega}_p = \Omega_p$ $\Rightarrow$ assertion (1).
The proof of assertion (2) follows closely the proof of \cite[Prop.~1(\textit{ii})]{Ki}.
%
%
%
\end{proof}
%


Clearly,
$
\Theta_p(\zeta,v_k \mathcal L^d)
$ 
does not depend on $q$, $r$. Taking $q=\infty$, $r=1$,
we obtain from Proposition \ref{prop_first}:
\begin{equation}
\label{omega_est}
\|\Theta_p(\zeta,v_k \mathcal L^d)\|_{p \rightarrow p} \leqslant C_p|\zeta|^{-1}, \quad \zeta \in \mathcal O.
\end{equation}


%
%

\begin{proposition}
\label{last_prop}

Let $p \in \mathcal I$.
For every $k=1,2,\dots$ $\mathcal O \subset \rho(-\Lambda_p(v_k\mathcal L^d)),$ the resolvent set of $-\Lambda_p(v_k\mathcal L^d)$, and
$$\Theta_{p}(\zeta,v_k \mathcal L^d)=(\zeta+\Lambda_p(v_k\mathcal L^d))^{-1}, \quad \zeta \in \mathcal O,$$
where 
 $\Lambda_{p}(v_k \mathcal L^d):=-\Delta+v_k \cdot \nabla$, $ D(\Lambda_{C_\infty}(v_k \mathcal L^d))=W^{2,p}.$

%
\end{proposition}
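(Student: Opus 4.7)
My plan for Proposition~\ref{last_prop} rests on classical perturbation theory plus a single algebraic identity, all wired to the bound $\|T_p\|_{p\to p}<1$ furnished by Proposition~\ref{prop_first}. Since $v_k\in C_0^\infty(\mathbb R^d,\mathbb C^d)\subset L^\infty$, the Gagliardo--Nirenberg interpolation $\|\nabla u\|_p\le \varepsilon\|\Delta u\|_p+C_\varepsilon\|u\|_p$ makes $v_k\cdot\nabla$ an $(-\Delta)$-bounded perturbation on $L^p$ of relative bound $0$. By the standard perturbation theorem for generators of analytic semigroups, the operator $\Lambda_p(v_k\mathcal L^d)=-\Delta+v_k\cdot\nabla$ with domain $W^{2,p}$ is closed, generates an analytic $C_0$-semigroup on $L^p$, and has resolvent set containing some right half-plane $\{\Real\zeta>\omega_k\}$. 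This disposes of the structural part of the statement.

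It remains to upgrade this to $\mathcal O\subset\rho(-\Lambda_p(v_k\mathcal L^d))$ together with the explicit identification. On $W^{2,p}$ one factors
\begin{equation*}
\zeta+\Lambda_p(v_k\mathcal L^d)\;=\;\bigl(1+v_k\cdot\nabla(\zeta-\Delta)^{-1}\bigr)\,(\zeta-\Delta),
\end{equation*}
so invertibility of $\zeta+\Lambda_p(v_k\mathcal L^d)$ on $W^{2,p}$ is equivalent to invertibility of $1+v_k\cdot\nabla(\zeta-\Delta)^{-1}$ on $L^p$. Writing $A:=|v_k|^{1/p'}$ and $B:=v_k^{1/p}\cdot\nabla(\zeta-\Delta)^{-1}$, one has $AB=v_k\cdot\nabla(\zeta-\Delta)^{-1}$ and $BA=T_p$, so the general identity
\begin{equation*}
(1+AB)^{-1}\;=\;1-A(1+BA)^{-1}B,
\end{equation*}
valid whenever $(1+BA)^{-1}\in\mathcal B(L^p)$, applies for every $\zeta\in\mathcal O$ thanks to $\|T_p\|_{p\to p}<1$ from Proposition~\ref{prop_first}. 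This simultaneously delivers $\mathcal O\subset\rho(-\Lambda_p(v_k\mathcal L^d))$ and the closed form
\begin{equation*}
(\zeta+\Lambda_p(v_k\mathcal L^d))^{-1}\;=\;(\zeta-\Delta)^{-1}-(\zeta-\Delta)^{-1}|v_k|^{1/p'}(1+T_p)^{-1}v_k^{1/p}\cdot\nabla(\zeta-\Delta)^{-1}.
\end{equation*}

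Finally, I match this expression with $\Theta_p(\zeta,v_k\mathcal L^d)$. Using the independence of $\Theta_p$ from the auxiliary parameters $q,r$, I specialize to $q=\infty$, $r=1$ and unfold the explicit representation $\Omega_p=\tilde\Omega_p=Q_p(q)(1+T_p)^{-1}G_p(r)$ from \eqref{alt_repr}; the correction term in $\Theta_p$ then collapses exactly onto the right-hand side displayed above. The main technical obstacle I anticipate is the rigorous justification of the operator identity $(1+AB)^{-1}=1-A(1+BA)^{-1}B$ when $A$ is only $\mathcal L^d$-measurable and $B$ involves the unbounded differential operator $\nabla$: the plan is to verify the identity first on $\mathcal S$, where both $A$ and $B$ act in a classical pointwise sense, and then close it in $\mathcal B(L^p)$ using the $L^p$-boundedness of $T_p$, $Q_p(q)$ and $G_p(r)$ recorded in the proof of Proposition~\ref{prop_first}.
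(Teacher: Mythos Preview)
Your approach is correct and essentially coincides with the paper's (which simply defers to \cite[Prop.~4]{Ki}): the factorization $\zeta+\Lambda_p=(1+v_k\cdot\nabla(\zeta-\Delta)^{-1})(\zeta-\Delta)$ on $W^{2,p}$, together with the $AB$/$BA$ identity reducing invertibility of $1+v_k\cdot\nabla(\zeta-\Delta)^{-1}$ to that of $1+T_p$, is exactly the mechanism used there. Your anticipated ``main technical obstacle'' is in fact absent here: since $v_k\in C_0^\infty$, the multiplication operator $A=|v_k|^{1/p'}$ and the operator $B=v_k^{1/p}\cdot\nabla(\zeta-\Delta)^{-1}$ are both \emph{bounded} on $L^p$, so the identity $(1+AB)^{-1}=1-A(1+BA)^{-1}B$ holds by direct algebraic verification in $\mathcal B(L^p)$ with no approximation step needed.
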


 \begin{proof}
The proof repeats the proof of {\cite[Prop.~4]{Ki}}.
 \end{proof}

\begin{proposition}
\label{conv_prop} 
For $p \in \mathcal I$,
$\mu \Theta_{p}(\mu,v_k\mathcal L^d) \overset{s}{\rightarrow} 1 \text{ in $L^p$ as } \mu\uparrow \infty \text{ uniformly in $k$}.$

\end{proposition}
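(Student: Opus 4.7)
The plan is to apply the standard $\varepsilon/3$-argument that reduces uniform strong resolvent-to-identity convergence to convergence on a dense subspace. By \eqref{omega_est},
$$
\sup_{k\geq 1,\,\mu\in\mathcal O}\|\mu\Theta_p(\mu,v_k\mathcal L^d)\|_{p\to p}\leq C_p,
$$
so combined with the density of $\mathcal S$ in $L^p$ it suffices to show that, for each fixed $f\in\mathcal S$, $\|\mu\Theta_p(\mu,v_k\mathcal L^d)f-f\|_p\to 0$ as $\mu\to\infty$ uniformly in $k$.

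For such $f$, since $\mathcal S\subset W^{2,p}=D(\Lambda_p(v_k\mathcal L^d))$, Proposition \ref{last_prop} yields
$$
\mu\Theta_p(\mu,v_k\mathcal L^d)f-f=-\Theta_p(\mu,v_k\mathcal L^d)\,\Lambda_p(v_k\mathcal L^d)f.
$$
Although $\|\Lambda_p(v_k\mathcal L^d)f\|_p=\|-\Delta f+v_k\cdot\nabla f\|_p$ is not a priori uniformly bounded in $k$ (no uniform pointwise control on $v_k$ is available), the weak form-bound of $v_k\mathcal L^d$---uniform in $k$ by Lemma \ref{approx_lem}---combined with the $T_p,Q_p,G_p$-machinery behind Proposition \ref{prop_first} yields the uniform-in-$k$ bound
$$
\|\Lambda_p(v_k\mathcal L^d)f\|_{\mathcal W^{-1/r',p}}\leq C_f\qquad(f\in\mathcal S).
$$
On the other hand, the structural definition $\Theta_p(\mu,v_k\mathcal L^d)=(\mu-\Delta)^{-\frac{1}{2}-\frac{1}{2q}}\Omega_p(\mu,v_k\mathcal L^d,q,r)(\mu-\Delta)^{-\frac{1}{2r'}}$ combined with Proposition \ref{prop_first}(1) and the scaling $\|(\mu-\Delta)^{-\alpha}\|_{p\to p}\lesssim\mu^{-\alpha}$ ($\alpha>0$) gives the smoothing estimate
$$
\|\Theta_p(\mu,v_k\mathcal L^d)\|_{\mathcal W^{-1/r',p}\to L^p}\lesssim\mu^{-\alpha_0}
$$
for some $\alpha_0>0$, uniformly in $k$. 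Composing,
$$
\|\mu\Theta_p(\mu,v_k\mathcal L^d)f-f\|_p=\|\Theta_p(\mu,v_k\mathcal L^d)\Lambda_p(v_k\mathcal L^d)f\|_p\lesssim \mu^{-\alpha_0}C_f\xrightarrow[\mu\to\infty]{}0
$$
uniformly in $k$, which is what is required.

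The main obstacle is establishing the uniform-in-$k$ negative-Bessel bound on $\Lambda_p(v_k\mathcal L^d)f$, i.e.\ controlling $\|v_k\cdot\nabla f\|_{\mathcal W^{-1/r',p}}$ uniformly in $k$ for $f\in\mathcal S$; this is the $L^p$-reformulation of the $L^2$-boundedness $\|Z_2(\mu,v_k\mathcal L^d)\|_{2\to 2}\leq\delta$ of Proposition \ref{prop2} and, via the factorizations used in the proof of Proposition \ref{prop_first}, reduces to the same $L^p$ bounds on $Q_p$ and $G_p$ already required there. All constants are automatically uniform in $k$ because, by Lemma \ref{approx_lem}, every $v_k\mathcal L^d$ lies in $\bar{\mathbf{F}}^{\scriptscriptstyle \frac{1}{2}}_\delta(\lambda)$ with the same $\delta$ and $\lambda$, so the argument is essentially the measure-drift transcription of the corresponding step in the proof of \cite[Prop.~3]{Ki}.
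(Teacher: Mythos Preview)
Your overall strategy---reduce to $f\in\mathcal S$ via the uniform bound \eqref{omega_est}, write $\mu\Theta_p f-f=-\Theta_p\Lambda_p(v_k\mathcal L^d)f$, and extract $\mu$-decay from the smoothing of $\Theta_p$---is sound, and the estimate $\|\Theta_p(\mu,v_k\mathcal L^d)\|_{\mathcal W^{-1/r',p}\to L^p}\lesssim\mu^{-\frac12-\frac{1}{2q}}$ does follow from Proposition~\ref{prop_first}(1). The gap is in the step you yourself flag as the ``main obstacle'': the claimed uniform bound $\|v_k\cdot\nabla f\|_{\mathcal W^{-1/r',p}}\leq C_f$ does \emph{not} reduce to the $Q_p$, $G_p$ estimates. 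If one factors $(\lambda-\Delta)^{-\frac{1}{2r'}}(v_k\cdot\nabla f)=\bigl[(\lambda-\Delta)^{-\frac{1}{2r'}}|v_k|^{1/p'}\bigr]\cdot G_p(r)\,(\lambda-\Delta)^{\frac12+\frac{1}{2r}}f$, the bracketed operator is \emph{not} a $Q_p(q)$ for any admissible $q$: since $r<p$ forces $r'>p'$, the exponent $\frac{1}{2r'}$ is strictly smaller than the $\frac{1}{2q'}>\frac{1}{2p'}$ required for $Q_p(q)$, so this factor is under-regularized and its $L^p$-boundedness is not supplied by the available machinery. The dual route via $|v_k|^{1/p'}(\lambda-\Delta)^{-\frac{1}{2r'}}$ on $L^{p'}$ runs into the same exponent mismatch.

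The fix---and presumably the content of \cite[Prop.~3]{Ki} to which the paper defers---is to bypass the intermediate negative-Sobolev norm and track the $\mu$-dependence inside the perturbative part of $\Theta_p$ directly. Write $\Theta_p(\mu,v_k\mathcal L^d)=(\mu-\Delta)^{-1}-(\mu-\Delta)^{-\frac12-\frac{1}{2q}}Q_p(q)(1+T_p)^{-1}G_p(r)(\mu-\Delta)^{-\frac{1}{2r'}}$ (the explicit representation from \cite{Ki}, cf.~\eqref{alt_repr}). The free part gives $\mu(\mu-\Delta)^{-1}f-f=(\mu-\Delta)^{-1}\Delta f\to 0$ in $L^p$, uniformly in $k$ since it is $k$-independent. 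For the perturbative part, note that $G_p(r)(\mu-\Delta)^{-\frac{1}{2r'}}f=v_k^{1/p}\cdot(\mu-\Delta)^{-1}\nabla f$ is independent of $r$; since $\nabla f\in L^p$ and the $G_p$-type bound yields $\||v_k|^{1/p}(\mu-\Delta)^{-\frac{1}{2\tilde r}}\|_{p\to p}\leq C$ uniformly in $k$ and $\mu\in\mathcal O$ for any fixed $\tilde r<p$, one obtains $\|v_k^{1/p}\cdot(\mu-\Delta)^{-1}\nabla f\|_p\leq C\mu^{-1+\frac{1}{2\tilde r}}\|\nabla f\|_p$. Together with $\|\mu(\mu-\Delta)^{-\frac12-\frac{1}{2q}}\|_{p\to p}\leq\mu^{\frac12-\frac{1}{2q}}$ and the uniform boundedness of $Q_p(q)(1+T_p)^{-1}$, the perturbative term is $O(\mu^{-\frac12-\frac{1}{2q}+\frac{1}{2\tilde r}})$, which tends to $0$ because $\frac{1}{\tilde r}\leq 1<1+\frac{1}{q}$. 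This gives the desired uniform-in-$k$ convergence.
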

\begin{proof}
The proof repeats the proof of {\cite[Prop.~3]{Ki}}.
\end{proof}

\begin{proposition}
\label{lem5}
There exists a sequence $\{\hat{v}_n\} \subset \conv\{v_k\} \subset C_0^\infty(\mathbb R^d,\mathbb R^d)$ such that
\begin{equation}
\label{conv_80}
\hat{v}_n \mathcal L^d  \overset{w}{\longrightarrow}  \sigma \text{ as } n \uparrow \infty,
\end{equation}
and 
\begin{equation}
\label{conv_5}
\Omega_2(\zeta,\hat{v}_n \mathcal L^d,q,r)  \overset{s}{\rightarrow}  \Omega_2(\zeta,\sigma,q,r) \text{ in }L^2, \quad \zeta \in \mathcal O.
\end{equation}
\end{proposition}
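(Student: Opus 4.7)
The plan is to first reduce the claim to a strong operator convergence $Z_2(\zeta, \hat v_n \mathcal L^d)\overset{s}{\to} Z_2(\zeta, \sigma)$ in $L^2$, and then to establish the latter through weak operator convergence on the original sequence $\{v_k\}$, followed by a Banach-Saks diagonal extraction producing the needed convex combinations. Since
\[
\Omega_2(\zeta, v\mathcal L^d, q, r) = (\zeta-\Delta)^{-\frac{1}{2}(\frac{1}{2}-\frac{1}{q})}\bigl(1+Z_2(\zeta, v\mathcal L^d)\bigr)^{-1}(\zeta-\Delta)^{-\frac{1}{2}(\frac{1}{2}-\frac{1}{r'})}
\]
with outer Bessel factors fixed in $\mathcal B(L^2)$, and since $\|Z_2(\zeta, \cdot)\|_{2\to 2}\leq \delta<1$ by Proposition \ref{prop2}, the resolvent identity
\[
(1+Z_2(\hat v_n))^{-1}-(1+Z_2(\sigma))^{-1}=(1+Z_2(\hat v_n))^{-1}\bigl[Z_2(\sigma)-Z_2(\hat v_n)\bigr](1+Z_2(\sigma))^{-1}
\]
together with the uniform bound $\|(1+Z_2(\cdot))^{-1}\|_{2\to 2}\leq(1-\delta)^{-1}$ reduces \eqref{conv_5} to the $Z_2$-level strong convergence.

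For the weak operator convergence, I would test against Schwartz functions. For $f, h \in \mathcal S$, the Bessel potentials $F_f := (\bar\zeta-\Delta)^{-1/4}f$ and $g_h := \nabla(\zeta-\Delta)^{-3/4}h$ are Schwartz, so
\[
\langle f, Z_2(\zeta, v_k\mathcal L^d)h\rangle = \int_{\mathbb R^d}\overline{F_f(x)}\,g_h(x)\cdot v_k(x)\,dx
\]
tests $v_k$ against the Schwartz vector field $\overline{F_f}g_h \in C_0(\mathbb R^d,\mathbb C^d)$; the weak convergence $v_k\mathcal L^d\overset{w}{\to}\sigma$ yields convergence to $\langle f, Z_2(\zeta,\sigma)h\rangle$. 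Density of $\mathcal S$ in $L^2$ and the uniform bound from Proposition \ref{prop2} upgrade this to weak operator convergence on all of $L^2$. Next, fix a countable dense $\{h_j\}\subset L^2$. Applying Banach-Saks to the $L^2$-weakly convergent sequence $Z_2(\zeta, v_k\mathcal L^d)h_1$ gives a subsequence whose Cesaro means — convex combinations $\tilde v^{(1)}_N\in\conv\{v_k\}$ by linearity of $v\mapsto Z_2(\zeta, v\mathcal L^d)$ — satisfy $Z_2(\zeta, \tilde v^{(1)}_N\mathcal L^d)h_1\to Z_2(\zeta, \sigma)h_1$ in norm. Iterating against $h_2, h_3, \ldots$ (using that Cesaro preserves norm-convergence) and Cantor-diagonalizing produces $\hat v_n\in\conv\{v_k\}$ with $Z_2(\zeta, \hat v_n\mathcal L^d)h_j\to Z_2(\zeta,\sigma)h_j$ for every $j$. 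A short quadratic-form argument based on $|\hat v_n|\leq\sum_i\alpha_{n,i}|v_{k_i}|$ shows the weak form bound is preserved under convex combinations, so $\|Z_2(\zeta,\hat v_n\mathcal L^d)\|_{2\to 2}\leq\delta$ uniformly; density of $\{h_j\}$ then extends strong convergence to all of $L^2$, while $\hat v_n\mathcal L^d\overset{w}{\to}\sigma$ persists since Cesaro means of weakly convergent sequences share the limit.

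The main obstacle will be the diagonal extraction in the second stage, namely arranging a single sequence of convex combinations that handles a dense family of test vectors simultaneously while preserving both weak-measure convergence and the form bound. The variant of the Kato-Ponce inequality of \cite{GO} referenced in the outline preceding this proposition should enter in widening the test-function class in the pairing above beyond $\mathcal S$, via a fractional-Leibniz estimate on $\overline{F_f}\,g_h$ that supports the density argument extending strong convergence to all of $L^2$ after the diagonalization.
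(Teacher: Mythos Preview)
Your argument is essentially correct and takes a genuinely different, more elementary route than the paper. The paper does \emph{not} work directly at the $Z_2$ level: it first localizes with cutoffs $\eta_r$, establishes the \emph{weak} $L^2$ convergence $(\lambda-\Delta)^{-1/4}\eta_1(v_k-\sigma)(\lambda-\Delta)^{-1/4}c \overset{w}{\to}0$ for a single fixed $c\in\mathcal S$, upgrades this to strong convergence via Mazur's theorem, and then invokes the Kato--Ponce inequality of \cite{GO} to transfer this single-vector strong convergence from the symmetric operator $(\lambda-\Delta)^{-1/4}\varphi_\ell(\lambda-\Delta)^{-1/4}$ to the asymmetric operator $(\lambda-\Delta)^{-1/4}\varphi_\ell\cdot\nabla(\lambda-\Delta)^{-3/4}$; a separate tail estimate (their Claim~\ref{claim_est}) removes the cutoff. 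Your approach bypasses all of this: you observe that the pairing $\langle f,Z_2(\zeta,v_k)h\rangle$ for $f,h\in\mathcal S$ is already a pairing of the Schwartz function $[(\bar\zeta-\Delta)^{-1/4}f]\,\nabla(\zeta-\Delta)^{-3/4}h$ against $v_k$, so weak operator convergence at the $Z_2$ level is immediate, and Banach--Saks plus diagonalization over a countable dense set of $h$'s yields strong convergence directly. Your route is shorter and avoids Kato--Ponce entirely; the paper's route, by contrast, isolates a single auxiliary function $c$ and thereby reduces the diagonalization to iteration over the cutoff radius $r$ rather than over a dense set of test vectors.

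Two small points. First, your speculation in the last paragraph about where Kato--Ponce ``should enter'' is off: in your scheme it is simply not needed, since uniform boundedness $\|Z_2(\zeta,\hat v_n)\|_{2\to2}\le\delta$ (which you correctly derive from $|\hat v_n|\le\sum\alpha_i|v_{k_i}|$) plus density already extend strong convergence from $\{h_j\}$ to $L^2$. Second, your construction is carried out at a fixed $\zeta$; to obtain a single sequence $\{\hat v_n\}$ valid for all $\zeta\in\mathcal O$ you should either diagonalize additionally over a countable dense set of $\zeta$'s, or (as the paper does) work at a single real $\zeta=\lambda$ and then use the resolvent identity to propagate to $\Real\zeta\ge\lambda$. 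Likewise, the persistence of $\hat v_n\mathcal L^d\overset{w}{\to}\sigma$ under the diagonal extraction requires arranging that the convex combinations at stage $n$ are drawn from $\{v_k\}_{k\ge n}$ (as the paper does explicitly), not merely that Ces\`aro means preserve weak limits stage by stage.
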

\begin{proof}
To prove \eqref{conv_5}, it suffices to establish convergence
$
Z_2(\zeta,\hat{v}_n \mathcal L^d ) \overset{s}{\rightarrow } Z_2(\zeta,\sigma) \text{ in } L^2,$  $\zeta \in \mathcal O.
$

Let $\eta_r \in C_0^\infty$, $0 \leqslant \eta_r \leqslant 1$, $\eta_r \equiv 1$ on $\{x \in \mathbb R^d:|x| \leqslant r\}$ and $\eta_r \equiv 0$ on $\{x \in \mathbb R^d:|x| \geqslant r+1\}$.

\begin{claim}
\label{jclaim}
We have

\smallskip

{\rm(}\textit{j}{\rm)} $\|(\zeta-\Delta)^{-\frac{1}{4}} |v_k| (\zeta-\Delta)^{-\frac{1}{4}}\|_{2 \rightarrow 2} \leqslant \delta$ for all $k$.

\smallskip

 {\rm(\textit{jj})}  $\|(\zeta-\Delta)^{-\frac{1}{4}} |\sigma| (\zeta-\Delta)^{-\frac{1}{4}}f\|_{2 } \leqslant \delta\|f\|_2$, for all $f \in \mathcal S$.
\end{claim}
\begin{proof}
Define $H:=|v_k|^{\frac{1}{2}}(\zeta-\Delta)^{-\frac{1}{4}}$.
We have
$\|(\zeta-\Delta)^{-\frac{1}{4}} |v_k| (\zeta-\Delta)^{-\frac{1}{4}}\|_{2 \rightarrow 2} = \|H^*H\|_{2 \rightarrow 2} = \|H\|_{2 \rightarrow 2}^2 \leqslant \delta,$
where $\|H\|_{2 \rightarrow 2}^2 \leqslant \delta \,(\Leftrightarrow \, v_k \mathcal L^d \in \bar{\mathbf{F}}_\delta^{\scriptsize \frac{1}{2}}(\lambda)$, cf.~Lemma \ref{approx_lem}(1)), i.e.~we have proved (\textit{j}).
An argument similar to the one in the proof of Proposition \ref{prop2}, but using assertion (\textit{j}), yields (\textit{jj}).
\end{proof}

\begin{claim}
\label{claim1}
There exists a sequence $\{\hat{v}_n\} \subset \conv\{v_k\}$ such that \eqref{conv_80} holds, and for every $r \geqslant 1$
$$
(\zeta-\Delta)^{-\frac{1}{4}}\eta_r(\hat{v}_n - \sigma) \cdot \nabla (\zeta-\Delta)^{-\frac{3}{4}} \overset{s}{\rightarrow} 0 \text{ in } L^2, \quad \Real \zeta \geqslant \lambda.
$$
{\rm(}here and below we use shorthand $\hat{v}_n - \sigma:=\hat{v}_n\mathcal L^d - \sigma${\rm)}.
\end{claim}

\begin{proof}[Proof of Claim \ref{claim1}]
In view of Claim \ref{jclaim}(\textit{j}), (\textit{jj}), it suffices to establish this convergence over $\mathcal S$. Let $c(x)=e^{-x^2}$, so that $c \in \mathcal S$,  $|(\zeta-\Delta)^{-\frac{1}{4}}c|>0$ on $\mathbb R^d$.

\textit{Step 1.}~Let $r=1$, so $\eta_r=\eta_1$.
Let us show that there exists a sequence $\{v_{\ell_1}^1\} \subset \conv\{v_k\}$ such that
\begin{equation}
\label{zeta_conv}
(\lambda-\Delta)^{-\frac{1}{4}} \eta_1(v^1_{\ell_1} - \sigma) \cdot \nabla (\lambda-\Delta)^{-\frac{3}{4}} \overset{s}{\rightarrow} 0 \text{ in } L^2 \text{ as } \ell_1 \uparrow \infty.
\end{equation}

First, we show that
\begin{equation}
\label{weak_conv}
(\lambda-\Delta)^{-\frac{1}{4}}\eta_1(v_k - \sigma) (\lambda-\Delta)^{-\frac{1}{4}}c \overset{w}{\rightarrow} 0 \text{ in } L^2.
\end{equation}
Indeed, by Claim \ref{jclaim}(\textit{j}), (\textit{jj}),
$\|(\lambda-\Delta)^{-\frac{1}{4}}\eta_1(v_k - \sigma)  (\lambda-\Delta)^{-\frac{1}{4}}c\|_2 \leqslant   2\delta \|c\|_2$ for all $k$. Hence, there exists a subsequence of $\{v_k\}$ (without loss of generality, it is $\{v_k\}$ itself) such that
$
(\lambda-\Delta)^{-\frac{1}{4}}\eta_1(v_k - \sigma) (\lambda-\Delta)^{-\frac{1}{4}}c \overset{w}{\rightarrow} h$ for some $h \in L^2$.
Therefore, given any $f \in \mathcal S$, we have
$
\langle f, (\lambda-\Delta)^{-\frac{1}{4}}\eta_1(v_k - \sigma) (\lambda-\Delta)^{-\frac{1}{4}}c \rangle  \rightarrow \langle f,h\rangle.
$
Along with that, since $v_k \mathcal L^d \overset{w}{\rightarrow} \sigma$, we also have
\begin{align*}
\langle f, (\lambda-\Delta)^{-\frac{1}{4}}\eta_1(v_k - \sigma) (\lambda-\Delta)^{-\frac{1}{4}}c \rangle 
= \langle (\lambda-\Delta)^{-\frac{1}{4}} f, \eta_1(v_k - \sigma) (\lambda-\Delta)^{-\frac{1}{4}}c \rangle \rightarrow 0,
\end{align*}
i.e.~$\langle f,h\rangle=0$. Since $f \in \mathcal S$ was arbitrary, we have $h=0$, which yields \eqref{weak_conv}. 

Now, in view of \eqref{weak_conv}, by Mazur's Theorem, there exists a sequence $\{v_{\ell_1}^1\} \subset \conv\{v_k\}$ such that
\begin{equation}
\label{mazur_conv}
(\lambda-\Delta)^{-\frac{1}{4}}\eta_1(v^1_{\ell_1} - \sigma)  (\lambda-\Delta)^{-\frac{1}{4}}c \overset{s}{\rightarrow} 0 \text{ in } L^2.
\end{equation}
We may assume without loss of generality that each $v^1_{\ell_1} \in \conv \{v_{n}\}_{n \geqslant \ell_1}$. 


Next, set $\ell:=\ell_1$, $\varphi_{\ell}:=\eta_1(v^1_{\ell} - \sigma)$, $\Phi:=(\lambda-\Delta)^{-\frac{1}{4}}c$, fix some $u \in \mathcal S$. We estimate: 
\begin{align*}
&\|(\lambda-\Delta)^{-\frac{1}{4}} \varphi_{\ell} \cdot \nabla (\lambda-\Delta)^{-\frac{3}{4}}u\|_2^2 \\ 
& = 
\left\langle \varphi_{\ell} \cdot \nabla (\lambda-\Delta)^{-\frac{3}{4}}u, (\lambda-\Delta)^{-\frac{1}{2}} \varphi_{\ell} \cdot \nabla (\lambda-\Delta)^{-\frac{3}{4}}u\right\rangle \\
& \text{$\biggl(\text{since }\varphi_\ell \equiv 0$ on $\{|x| \geqslant 2\}$, in the left multiple $\varphi_\ell=\varphi_{\ell} \Phi \frac{\eta_2}{\Phi}\biggr)$} \\
&= \left\langle \varphi_{\ell}  \Phi \frac{\eta_2}{\Phi} \cdot \nabla (\lambda-\Delta)^{-\frac{3}{4}}u, (\lambda-\Delta)^{-\frac{1}{2}} \varphi_{\ell} \cdot \nabla (\lambda-\Delta)^{-\frac{3}{4}}u\right\rangle \\
&=  \left\langle \varphi_{\ell}   \Phi, \frac{\eta_2}{\Phi} \nabla (\lambda-\Delta)^{-\frac{3}{4}}u \left[ (\lambda-\Delta)^{-\frac{1}{2}} \varphi_{\ell} \cdot \nabla (\lambda-\Delta)^{-\frac{3}{4}}u \right]\right\rangle \\
\qquad &\quad(\text{here we are using in the left multiple that } \varphi_\ell=(\lambda-\Delta)^{\frac{1}{4}} (\lambda-\Delta)^{-\frac{1}{4}} \varphi_\ell) \\
&=\biggl\langle (\lambda-\Delta)^{-\frac{1}{4}} \varphi_{\ell} \Phi, (\lambda-\Delta)^{\frac{1}{4}}(fg_\ell) 
\biggr\rangle
\end{align*}
where we set $f:=\frac{\eta_2}{\Phi} \nabla (\lambda-\Delta)^{-\frac{3}{4}}u \in C_0^\infty(\mathbb R^d,\mathbb R^d), 
$
$ g_\ell:=(\lambda-\Delta)^{-\frac{1}{2}} \varphi_{\ell} \cdot \nabla (\lambda-\Delta)^{-\frac{3}{4}}u \in (\lambda-\Delta)^{-\frac{1}{4}}L^2$ (in view of Claim \ref{jclaim}(\textit{j}), (\textit{jj})).
Thus, in view of the above estimates,
\begin{align*}
\|(\lambda-\Delta)^{-\frac{1}{4}} & \varphi_{\ell} \cdot \nabla (\lambda-\Delta)^{-\frac{3}{4}}u\|_2^2 \leqslant \|(\lambda-\Delta)^{-\frac{1}{4}} \varphi_{\ell} \Phi\|_2 \|(\lambda-\Delta)^{\frac{3}{4}}(fg_\ell)\|_2.
\end{align*}
By the Kato-Ponce inequality of \cite[Theorem 1]{GO},
$$
\|(\lambda-\Delta)^{\frac{1}{4}}(fg_\ell)\|_2 \leqslant C\biggl(\|f\|_\infty \|(\lambda-\Delta)^{\frac{1}{4}}g_\ell\|_2 + \|(\lambda-\Delta)^{\frac{1}{4}}f\|_\infty \|g_\ell\|_2\biggr),
$$
for some $C=C(d)<\infty$.
Clearly, $\|f\|_\infty$, $\|(\lambda-\Delta)^{\frac{1}{4}}f\|_\infty <\infty$, and $\|(\lambda-\Delta)^{\frac{1}{4}}g_\ell\|_2$, $\|g_\ell\|_2$ are uniformly (in $\ell$) bounded from above according to Claim \ref{jclaim}(\textit{j}), (\textit{jj}).
Thus, in view of \eqref{mazur_conv}, we obtain \eqref{zeta_conv} (recalling that $\ell_1=\ell$, and $\varphi_{\ell_1}=\eta_1(v^1_{\ell_1} - \sigma)$).

\textit{Step 2.}~Now, we can repeat the argument of Step 1, but starting with sequence $\{v^1_{\ell_1}\}$ in place of $\{v_l\}$, thus obtaining a sequence $\{v^2_{\ell_2}\} \subset \conv \{v^1_{\ell_1}\}$
such that
$$
(\lambda-\Delta)^{-\frac{1}{4}} \eta_2(v^2_{\ell_2} - \sigma) \cdot \nabla (\lambda-\Delta)^{-\frac{3}{4}} \overset{s}{\rightarrow} 0 \text{ in } L^2 \text{ as } \ell_2 \uparrow \infty.
$$
We may assume without loss of generality that each $v^2_{\ell_2} \in \conv \{v_{\ell_1}^1\}_{\ell_1 \geqslant \ell_2}$. Therefore, we also have
$$
(\lambda-\Delta)^{-\frac{1}{4}} \eta_1(v^2_{\ell_2} - \sigma) \cdot \nabla (\lambda-\Delta)^{-\frac{3}{4}} \overset{s}{\rightarrow} 0 \text{ in } L^2 \text{ as } \ell_2 \uparrow \infty.
$$
Repeating this procedure $n-2$ times, we obtain a sequence $\{v^n_{\ell_n}\} \subset \conv\{v^{n-1}_{\ell_{n-1}}\}~(\subset \conv \{v_k\})$ such that
$$
(\lambda-\Delta)^{-\frac{1}{4}} \eta_i(v^n_{\ell_n} - \sigma) \cdot \nabla (\lambda-\Delta)^{-\frac{3}{4}} \overset{s}{\rightarrow} 0 \text{ in } L^2 \text{ as } \ell_n \uparrow \infty, \quad 1 \leqslant i \leqslant n.
$$

\textit{Step 3.}~We set 
$
\hat{v}_n:=v^n_{\ell_n},$ $n \geqslant 1,
$
so for every $r \geqslant 1$
\begin{equation}
\label{zeta_conv2}
(\lambda-\Delta)^{-\frac{1}{4}}\eta_r(\hat{v}_n - \sigma) \cdot \nabla (\lambda-\Delta)^{-\frac{3}{4}} \overset{s}{\rightarrow} 0 \text{ in } L^2.
\end{equation}
Since $v^n_{\ell_n} \in \conv \{v^{n-1}_{\ell_{n-1}}\}_{\ell_{n-1} \geqslant \ell_{n}}$,  $v^{n-1}_{\ell_{n-1}} \in \conv \{v^{n-2}_{\ell_{n-2}}\}_{\ell_{n-2} \geqslant \ell_{n-1}}$, etc, we obtain that
$
v^n_{\ell_n} \in \conv\{v_k\}_{k \geqslant \ell_n},
$
i.e.~we also have \eqref{conv_80}. Finally, \eqref{zeta_conv2} combined with the resolvent identity yield
$$
(\zeta-\Delta)^{-\frac{1}{4}}\eta_r(\hat{v}_n - \sigma) \cdot \nabla (\zeta-\Delta)^{-\frac{3}{4}} \overset{s}{\rightarrow} 0 \text{ in } L^2, \quad \Real\zeta \geqslant \lambda.
$$
i.e.~we have proved Claim \ref{claim1}.
\end{proof}

We are in a position to complete the proof of Proposition \ref{lem5}. Let us show 
that, for every $\zeta \in \mathcal O$
$$
Z_2(\zeta,\hat{v}_n\mathcal L^d)g - Z_2(\zeta,\sigma)g=(\zeta-\Delta)^{-\frac{1}{4}}(\hat{v}_n-\sigma)\cdot\nabla (\zeta-\Delta)^{-\frac{3}{4}}g \overset{s}{\rightarrow} 0 \text{ in }L^2, \quad g \in \mathcal S.
$$
Let us fix some $g \in \mathcal S$. We have
\begin{align*}
(\zeta-\Delta)^{-\frac{1}{4}}(\hat{v}_n-\sigma)\cdot\nabla (\zeta-\Delta)^{-\frac{3}{4}}g 
&= 
(\zeta-\Delta)^{-\frac{1}{4}}(\hat{v}_n  -\eta_r \hat{v}_n)\cdot\nabla (\zeta-\Delta)^{-\frac{3}{4}}g  \\
&+ (\zeta-\Delta)^{-\frac{1}{4}}(\eta_r \hat{v}_n - \eta_r \sigma)\cdot\nabla (\zeta-\Delta)^{-\frac{3}{4}}g \\
&+  (\zeta-\Delta)^{-\frac{1}{4}}(\eta_r \sigma -\sigma)\cdot\nabla (\zeta-\Delta)^{-\frac{3}{4}}g   =:I_{1,r,n}+I_{2,r,n}+I_{3,r}. 
\end{align*}

\begin{claim}
\label{claim_est}
Given any $\varepsilon>0$, there exists $r$ such that
$
\|I_{3,r}\|_2,~~ \|I_{1,r,n}\|_2<\varepsilon, \text{ for all } n
$, $\zeta \in \mathcal O$.
\end{claim}
\begin{proof}[Proof of Claim \ref{claim_est}]
It suffices to prove $\|I_{1,r,n}\|_2<\varepsilon \text{ for all } n.$
We will need the following elementary estimate:
$|\nabla (\zeta-\Delta)^{-\frac{3}{4}}(x,y)| \leqslant M_d (\kappa_{d}^{-1}\Real\zeta-\Delta)^{-\frac{1}{4}}(x,y),$ $x, y \in \mathbb R^d,$ $x \neq y.$
We have
\begin{align*}
\|I_{1,r,n}\|_2&=\|(\Real\zeta-\Delta)^{-\frac{1}{4}} (1-\eta_r) \hat{v}_n \cdot \nabla (\Real\zeta-\Delta)^{-\frac{3}{4}}g\|_2 \\ 
&\leqslant c_d M_d \|(\Real\zeta-\Delta)^{-\frac{1}{4}}(1-\eta_r) |\hat{v}_n| (\kappa_{d}^{-1}\Real\zeta-\Delta)^{-\frac{1}{4}} g\|_2 \\
&\leqslant c_d M_d \bigl\|(\Real\zeta-\Delta)^{-\frac{1}{4}}|\hat{v}_n|^{\frac{1}{2}}\bigr\|_{2 \rightarrow 2} \bigl\|(1-\eta_r) |\hat{v}_n|^{\frac{1}{2}} (\kappa_{d}^{-1}\Real\zeta-\Delta)^{-\frac{1}{4}} g\bigr\|_2
\end{align*}
We have $\bigl\|(\Real\zeta-\Delta)^{-\frac{1}{4}}|\hat{v}_n|^{\frac{1}{2}}\bigr\|_{2 \rightarrow 2} \leqslant \delta$ in view of Lemma \ref{approx_lem}(1). In turn,
\begin{align*}
(1-\eta_r) |\hat{v}_n|^{\frac{1}{2}} & (\kappa_{d}^{-1} \Real\zeta-\Delta)^{-\frac{1}{4}} g \\ &= |\hat{v}_n|^{\frac{1}{2}}(\kappa_{d}^{-1}\Real\zeta-\Delta)^{-\frac{1}{4}}\,(\kappa_{d}^{-1}\Real\zeta-\Delta)^{\frac{1}{4}}(1-\eta_r)(\kappa_{d}^{-1}\Real\zeta-\Delta)^{-\frac{1}{4}}g,
\end{align*}
so
\begin{align*}
\bigl\|(1-\eta_r) |\hat{v}_n|^{\frac{1}{2}} &(\kappa_{d}^{-1} \Real\zeta-\Delta)^{-\frac{1}{4}} g\bigr\|_2 \leqslant \delta \|(\kappa_{d}^{-1} \Real\zeta-\Delta)^{\frac{1}{4}}(1-\eta_r)(\kappa_{d}^{-1} \Real\zeta-\Delta)^{-\frac{1}{4}}g\|_2,
\end{align*}
where $\delta \|(\kappa_{d}^{-1} \Real\zeta-\Delta)^{\frac{1}{4}}(1-\eta_r)(\kappa_{d}^{-1} \Real\zeta-\Delta)^{-\frac{1}{4}}g\|_2 \rightarrow 0 \text{ as } r \uparrow \infty$.
The proof of Claim \ref{claim_est} is completed.
\end{proof}

Claim \ref{claim1}, which 
yields convergence $\|I_{2,r,n}\|_2 \rightarrow 0$ as $n \uparrow \infty$ for every $r$, and Claim \ref{claim_est}, imply that 
$$
Z_2(\zeta,\hat{v}_n\mathcal L^d)g - Z_2(\zeta,\sigma)g \overset{s}{\rightarrow} 0 \text{ in } L^2, \quad g \in \mathcal S, \quad \zeta \in \mathcal O,
$$
which, in view of Claim \ref{jclaim}(\textit{j}), (\textit{jj}),
yields $Z_2(\zeta,\hat{v}_n\mathcal L^d) - Z_2(\zeta,\sigma) \overset{s}{\rightarrow} 0$, $\zeta \in \mathcal O$, in $L^2$ ($\Rightarrow $\eqref{conv_5}). By Claim \ref{claim1}, we also have \eqref{conv_80}. This completes the proof of Proposition \ref{lem5}.
\end{proof}



%

\begin{proposition}
\label{lem50}
Let $p \in \mathcal I$. There exist constants $C_{p}$, $C_{p,q,r}<\infty$ such that
for every $\zeta \in \mathcal O$ 

\smallskip
{\rm(1)} $
\|\Omega_p(\zeta,\sigma,q,r)\|_{p \rightarrow p} \leqslant C_{p,q,r}
$ for all $k$,

\smallskip
{\rm(2)} $
 \|\Omega_p(\zeta,\sigma,\infty,1)\|_{p \rightarrow p} \leqslant C_{p}|\zeta|^{-\frac{1}{2}},
$ for all $k$.

\end{proposition}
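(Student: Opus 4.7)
The plan is to transfer the uniform $L^p$-bounds obtained in Proposition \ref{prop_first} for the regularized drifts $v_k \mathcal{L}^d$ (and their convex combinations) to the limiting measure $\sigma$, using the strong $L^2$-convergence furnished by Proposition \ref{lem5} together with a Fatou-type lower semicontinuity argument. Throughout, let $\{\hat{v}_n\} \subset \conv\{v_k\}$ be the sequence produced by Proposition \ref{lem5}, so that $\hat{v}_n \mathcal{L}^d \overset{w}{\longrightarrow} \sigma$ and $\Omega_2(\zeta,\hat{v}_n \mathcal L^d, q, r) \overset{s}{\rightarrow} \Omega_2(\zeta,\sigma,q,r)$ in $L^2$.

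First I would verify that the convex combinations $\hat{v}_n$ still lie in $\bar{\mathbf{F}}^{\scriptscriptstyle \frac{1}{2}}_\delta(\lambda)$ with the \emph{same} form-bound: if $\hat{v}_n = \sum_j \alpha_j^{(n)} v_{k_j}$ with $\alpha_j^{(n)} \geqslant 0$, $\sum_j \alpha_j^{(n)} = 1$, then $|\hat{v}_n| \leqslant \sum_j \alpha_j^{(n)} |v_{k_j}|$ pointwise, and convexity of the defining quadratic form yields $\hat{v}_n \mathcal{L}^d \in \bar{\mathbf{F}}^{\scriptscriptstyle \frac{1}{2}}_\delta(\lambda)$. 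Consequently, Proposition \ref{prop_first} applies to each $\hat{v}_n$, giving the uniform bounds
$$
\sup_n \|\Omega_p(\zeta, \hat{v}_n \mathcal L^d, q, r)\|_{p \rightarrow p} \leqslant C_{p,q,r}, \qquad \sup_n \|\Omega_p(\zeta, \hat{v}_n \mathcal L^d, \infty, 1)\|_{p \rightarrow p} \leqslant C_p |\zeta|^{-\frac{1}{2}}.
$$

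Next, fix $f \in L^p \cap L^2$. By the very definition \eqref{omega_p_def}, $\Omega_p(\zeta,\hat{v}_n \mathcal L^d, q,r)$ agrees with $\Omega_2(\zeta,\hat{v}_n \mathcal L^d, q,r)$ on $L^p \cap L^2$, so
$$
\|\Omega_2(\zeta,\hat{v}_n \mathcal L^d, q,r)f\|_p \leqslant C_{p,q,r} \|f\|_p \qquad \text{for every } n.
$$
Proposition \ref{lem5} gives $\Omega_2(\zeta,\hat{v}_n \mathcal L^d, q,r) f \to \Omega_2(\zeta,\sigma,q,r) f$ in $L^2$; extracting a subsequence converging pointwise a.e.\ and applying Fatou's lemma, I obtain
$$
\|\Omega_2(\zeta, \sigma, q, r) f\|_p \leqslant \liminf_n \|\Omega_2(\zeta,\hat{v}_n \mathcal L^d, q,r)f\|_p \leqslant C_{p,q,r} \|f\|_p.
$$
Thus $\Omega_2(\zeta,\sigma,q,r)$ restricts to a bounded operator on the dense subspace $L^p \cap L^2 \subset L^p$ with the same constant. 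By density, its $L^p$-closure $\Omega_p(\zeta,\sigma,q,r)$ (which is exactly the object defined in \eqref{omega_p_def}) satisfies $\|\Omega_p(\zeta,\sigma,q,r)\|_{p \to p} \leqslant C_{p,q,r}$, proving (1). The proof of (2) is identical, with $q = \infty$, $r = 1$ and the sharper bound $C_p |\zeta|^{-\frac{1}{2}}$ supplied by Proposition \ref{prop_first}(2).

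The routine but essential bookkeeping is the convexity check for $\bar{\mathbf{F}}^{\scriptscriptstyle \frac{1}{2}}_\delta(\lambda)$ and the justification that the $L^2$-limit coincides a.e.\ with a subsequence limit; there is no real obstacle here, since the hard analytic work (the Kato--Ponce-based convergence and the uniform $L^p$-estimates) was already carried out in Propositions \ref{prop_first} and \ref{lem5}. This step is genuinely a Fatou-type closure argument whose only subtlety is that the $\Omega_p$-operator on the limit $\sigma$ is \emph{defined} by closure from $L^2$, so the bounds must be obtained before one can even speak of $\Omega_p(\zeta,\sigma,q,r)$ as an element of $\mathcal B(L^p)$.
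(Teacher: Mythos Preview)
Your proposal is correct and is precisely the intended expansion of the paper's one-line proof ``Immediate from Proposition \ref{prop_first} and Proposition \ref{lem5}.'' The paper suppresses exactly the two routine points you spell out---that convex combinations of the $v_k$ remain in $\bar{\mathbf{F}}^{\scriptscriptstyle \frac{1}{2}}_\delta(\lambda)$ (so Proposition \ref{prop_first} applies to the $\hat v_n$), and that the uniform $L^p$-bounds pass to the $L^2$-strong limit via Fatou along an a.e.-convergent subsequence---and your treatment of both is sound.
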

\begin{proof} Immediate from Proposition \ref{prop_first} and Proposition \ref{lem5}.
\end{proof}



%
%

Now, we assume that $p \in \mathcal J \subsetneq \mathcal I$.

\begin{proposition}
\label{lem50_p}

Let $\{\hat{v}_n\}$ be the sequence in Proposition \ref{lem5}. For any $p \in \mathcal J$,
$$
\Omega_p(\zeta,\hat{v}_n \mathcal L^d,q,r)  \overset{s}{\rightarrow}  \Omega_p(\zeta,\sigma,q,r) \text{ in }L^p, \quad \zeta \in \mathcal O. 
$$
\end{proposition}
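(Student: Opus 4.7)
The plan is to upgrade the $L^2$ strong convergence established in Proposition \ref{lem5} to $L^p$ strong convergence by combining it with a uniform $L^{2(p-1)}$ bound and invoking the log-convexity of $L^s$-norms, followed by a density argument in $L^p$. The reason for restricting from $\mathcal I$ to $\mathcal J$ is the elementary arithmetic identity $p \in \mathcal J \Leftrightarrow 2(p-1) \in \mathcal I$, obtained by direct inspection of the endpoint formulae.

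First, by the very definition \eqref{omega_p_def}, the operators $\Omega_p(\zeta,\cdot,q,r)$, $\Omega_{2(p-1)}(\zeta,\cdot,q,r)$ and $\Omega_2(\zeta,\cdot,q,r)$ are all closures, in their respective $L^s$-topologies, of one and the same concrete operator densely defined on $\mathcal S$; they therefore agree on $\mathcal S$. Setting
\[
R_n := \Omega_p(\zeta,\hat v_n\mathcal L^d,q,r) - \Omega_p(\zeta,\sigma,q,r),
\]
Proposition \ref{lem5} gives $\|R_n f\|_2 \to 0$ for every $f \in \mathcal S$. Since $2(p-1) \in \mathcal I$, Proposition \ref{prop_first}(1) applied with $2(p-1)$ in place of $p$, together with Proposition \ref{lem50}(1), supplies the uniform bound $\sup_n \|R_n f\|_{2(p-1)} < \infty$. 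As $p$ lies between $2$ and $2(p-1)$ (regardless of whether $p \geqslant 2$ or $p < 2$), Hölder's inequality furnishes $\theta \in (0,1]$ satisfying $\frac{1}{p} = \frac{\theta}{2} + \frac{1-\theta}{2(p-1)}$ and
\[
\|R_n f\|_p \leqslant \|R_n f\|_2^{\theta}\|R_n f\|_{2(p-1)}^{1-\theta} \longrightarrow 0, \qquad f \in \mathcal S.
\]

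To extend the strong convergence from $\mathcal S$ to all of $L^p$, I would combine the pointwise convergence above with the uniform $L^p$-bounds $\sup_n \|\Omega_p(\zeta,\hat v_n\mathcal L^d,q,r)\|_{p \to p} < \infty$ and $\|\Omega_p(\zeta,\sigma,q,r)\|_{p \to p} < \infty$ (Propositions \ref{prop_first}(1) and \ref{lem50}(1)) and the density of $\mathcal S \subset L^p$, via the standard three-$\varepsilon$ decomposition $R_nf = R_n(f - \tilde f) + R_n\tilde f$ with $\tilde f \in \mathcal S$ chosen so that $\|f-\tilde f\|_p<\varepsilon$.

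The main obstacle is really conceptual rather than technical: for a $\sigma$ with nontrivial singular part one cannot define $\Omega_p(\zeta,\sigma,q,r)$ through the factorised formula \eqref{alt_repr} used for the smooth approximants, since products such as $\sigma\cdot\nabla(\zeta-\Delta)^{-1}|\cdot|^{1/p'}$ are not meaningful on $L^p$ when $\sigma$ is only a measure. This is precisely why the definition \eqref{omega_p_def} forces a detour through $L^2$, which in turn forces the interpolation argument above and the resulting restriction from $\mathcal I$ to $\mathcal J$ (cf.~Remark~I).
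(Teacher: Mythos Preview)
Your proof is correct and follows essentially the same approach as the paper: both interpolate between the $L^2$ convergence from Proposition~\ref{lem5} and the uniform $L^{2(p-1)}$ bounds from Propositions~\ref{prop_first} and~\ref{lem50}, exploiting the key arithmetic $p\in\mathcal J\Leftrightarrow 2(p-1)\in\mathcal I$, and then pass from $\mathcal S$ to $L^p$ by density. The only cosmetic difference is that the paper writes the interpolation as the explicit H\"older/Cauchy--Schwarz inequality $\|g\|_p^p\leqslant\|g\|_{2(p-1)}^{p-1}\|g\|_2$ (i.e.\ your $\theta=1/p$), rather than invoking log-convexity abstractly.
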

\begin{proof}
Set $\Omega_p \equiv \Omega_p(\zeta,\sigma,q,r)$, $\Omega_p^n \equiv \Omega_p(\zeta,\hat{v}_n\mathcal L^d,q,r)$.  Recall that since $p \in \mathcal J$, we have $2(p-1) \in \mathcal I$. 
Since $\Omega_p$, $\Omega_p^n \in \mathcal B(L^p)$, it suffices to prove convergence on $\mathcal S$. We have ($f \in \mathcal S$):
\begin{equation}
\label{hold_est}
\|\Omega_pf - \Omega_p^nf\|_p^p \leqslant  \|\Omega_pf - \Omega_p^nf\|_{2(p-1)}^{p-1} \|\Omega_pf - \Omega_p^nf\|_2.
\end{equation}
Let us estimate the right-hand side in \eqref{hold_est}:

1) $\Omega_p f - \Omega_p^n f~\bigl(= \Omega_{2(p-1)}f - \Omega_{2(p-1)}^n f\bigr)$ is uniformly bounded in $L^{2(p-1)}$ by Proposition \ref{prop_first} and Proposition \ref{lem50},

2) $\Omega_pf - \Omega_p^n f = \Omega_2f - \Omega_2^n f  \overset{s}{\rightarrow}  0$ in $L^2$ as $k \uparrow \infty$ by Proposition \ref{lem5}.

\smallskip

Therefore, by \eqref{hold_est},
$\Omega_p^n f  \overset{s}{\rightarrow}  \Omega_pf$ in $L^p$, as needed.
\end{proof}

This completes the proof of assertion (\textit{i}), and thus the proof of Theorem \ref{cor0}.

\section{Proof of Theorem \ref{cor1}} 


(\textit{i}) The approximating vector fields $v_k$ were constructed in Section \ref{approxsect}.
The proof repeats the proof of \cite[Theorem 2]{Ki}. 
Namely, we verify conditions of the Trotter approximation theorem for $\Lambda_{C_\infty}(v_k):=-\Delta+v_k \cdot \nabla$, $D(\Lambda_{C_\infty}(v_k))=C^{2} \cap C_\infty$:



%


\smallskip

$1^{\circ}$) $\sup_n\|(\mu+\Lambda_{C_\infty}(v_k))^{-1}\|_{\infty \rightarrow \infty} \leqslant \mu^{-1}$, $\mu \geqslant \kappa_{d} \lambda_{\delta}$.

$2^{\circ}$) $\mu (\mu+\Lambda_{C_\infty}(v_k))^{-1} \rightarrow 1$ in $C_\infty$ as $\mu \uparrow \infty$ uniformly in $n$.

$3^{\circ}$) There exists $\text{\small $s\text{-}C_\infty\text{-}$}\lim_n (\mu+\Lambda_{C_\infty}(v_k))^{-1}$ for some $\mu \geqslant \kappa_{d} \lambda$.

\smallskip

$1^{\circ}$) is immediate.
Let us verify  $2^{\circ}$) and $3^{\circ}$).
Fix some $p \in \mathcal J$, $p>d-1$ (such $p$ exists since $m_{d}\delta<\frac{2d-5}{(d-2)^2}$), and
let 
\begin{equation}
\label{sob_repr2}
\Theta_p(\mu,\sigma):=(\mu-\Delta)^{-\frac{1}{2}-\frac{1}{2q}} \Omega_p(\mu,\sigma,q,1) \in \mathcal B(L^p), \quad \mu \geqslant \kappa_{d}\lambda,
\end{equation}
where $\max\{2,p\}<q$, see the proof of Theorem \ref{cor0}. We will be using the properties of $\Theta_p(\mu,\sigma)$ established there.
Without loss of generality, we may assume that $\{v_k\}$ is the sequence constructed in Proposition \ref{lem50_p}, that is, $v_k \overset{w}{\rightarrow} \sigma,$ and $\Omega_p(\mu,v_k\mathcal L^d,q,1) \overset{s}{\rightarrow} \Omega_p(\mu,\sigma,q,1) \text{  in } L^p$ $\text{as } k\uparrow\infty.$

Given any $\gamma<1-\frac{d-1}{p}$ we can select
$q$ sufficiently close to $p$ so that  by the Sobolev embedding theorem,
$$(\mu-\Delta)^{-\frac{1}{2}-\frac{1}{2q}} [L^p] \subset C^{0,\gamma} \cap L^p, \quad \text{ and }\quad (\mu-\Delta)^{-\frac{1}{2}-\frac{1}{2q}}  \in \mathcal B(L^p,C_\infty).$$
Then Proposition \ref{lem50_p} yields
$\Theta_p(\mu,\hat{v}_n\mathcal L^d)
f  \overset{s}{\rightarrow}   \Theta_p(\mu,\sigma) f$ in $C_\infty$, $f \in \mathcal S,$
as $n \uparrow \infty$. The latter, combined with the next proposition and $1^{\circ}$), verifies condition $3^{\circ}$):

\begin{proposition}
For every $k=1,2,\dots$, $\Theta_p(\mu,v_k\mathcal L^d) \mathcal S \subset \mathcal S$, and
$$
(\mu+\Lambda_{C_\infty}(v_k \mathcal L^d))^{-1}|_{\mathcal S} = \Theta_p(\mu,v_k\mathcal L^d)|_{\mathcal S}, \quad \mu \geqslant \kappa_{d}\lambda.
$$
\end{proposition}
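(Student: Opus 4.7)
The strategy is to lift the $L^p$-level identification $\Theta_p(\mu, v_k\mathcal L^d) = (\mu + \Lambda_p(v_k \mathcal L^d))^{-1}$ from Proposition \ref{last_prop} to the $C_\infty$-level, using the smoothness and compact support of the approximating $v_k$ to exit $L^p$ and enter $\mathcal S$. Fix $f \in \mathcal S$ and $\mu \geqslant \kappa_{d}\lambda$, and set $u := \Theta_p(\mu, v_k\mathcal L^d) f$. Proposition \ref{last_prop} gives $u \in W^{2,p}$ together with the $L^p$-identity
$$(\mu - \Delta + v_k \cdot \nabla) u = f.$$

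The first step is to show $u \in \mathcal S$. Since $v_k$ and $f$ are smooth, a standard elliptic bootstrap applied to the preceding equation yields $u \in C^\infty$, and the $L^p$-identity becomes a pointwise identity. I would then rewrite it as $(\mu - \Delta) u = f - v_k \cdot \nabla u$; the compact support of $v_k$ makes $v_k \cdot \nabla u$ lie in $C_0^\infty \subset \mathcal S$, so the right-hand side belongs to $\mathcal S$. Inverting the Bessel operator $(\mu - \Delta)^{-1}$, which preserves $\mathcal S$ by standard Fourier-multiplier considerations, yields $u \in \mathcal S$. This simultaneously gives the inclusion $\Theta_p(\mu,v_k\mathcal L^d)\mathcal S \subset \mathcal S$.

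For the identification, I would observe that, since $v_k \in C_0^\infty$, the operator $\Lambda_{C_\infty}(v_k \mathcal L^d) = -\Delta + v_k \cdot \nabla$ on $D := C^2 \cap C_\infty$ is accretive by the classical maximum principle (at an interior maximum of any $u \in D$ one has $\Lambda_{C_\infty}(v_k\mathcal L^d) u \geqslant 0$), and the bounded smoothness of $v_k$ brings it inside the scope of classical Feller-generator results, producing a contraction $C_0$-semigroup on $C_\infty$ with $(0,\infty) \subset \rho(-\Lambda_{C_\infty}(v_k\mathcal L^d))$. In particular $\mu \geqslant \kappa_{d}\lambda > 0$ lies in this resolvent set. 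Since $u \in \mathcal S \subset D$ satisfies $(\mu + \Lambda_{C_\infty}(v_k\mathcal L^d))u = f$ pointwise, injectivity of $\mu + \Lambda_{C_\infty}(v_k\mathcal L^d)$ forces $u = (\mu + \Lambda_{C_\infty}(v_k\mathcal L^d))^{-1}f$.

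The main obstacle is the Schwartz-regularity step: rapid decay of $u$ is not recorded in the $L^p$-information and must be extracted from the equation itself. The compact support of $v_k$ is what makes this feasible, since it collapses the drift term outside a large ball and reduces the decay question to the routine behavior of $(\mu - \Delta)^{-1}$ on $\mathcal S$. The subsequent step of certifying that $\mu$ lies in $\rho(-\Lambda_{C_\infty}(v_k\mathcal L^d))$ is straightforward given the smooth bounded drift, but the key conceptual point is that the $L^p$-resolvent identification does not automatically descend to $C_\infty$; it does so here only because $u$ happens to land in $\mathcal S \cap D(\Lambda_{C_\infty}(v_k\mathcal L^d))$.
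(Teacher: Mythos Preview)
Your argument is correct and follows the natural line: identify $u=\Theta_p(\mu,v_k\mathcal L^d)f$ via Proposition~\ref{last_prop}, bootstrap to $C^\infty$ using the smooth coefficients, exploit the compact support of $v_k$ to write $(\mu-\Delta)u\in\mathcal S$ and conclude $u\in\mathcal S$ by Fourier inversion, and finally invoke injectivity of $\mu+\Lambda_{C_\infty}(v_k\mathcal L^d)$ on $C_\infty$. The paper does not spell out a proof here but simply refers to \cite[Prop.~6]{Ki}; your reconstruction is precisely the kind of argument one expects that reference to contain, and there is no meaningful difference to report.
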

\begin{proof}
The proof repeats the proof of {\cite[Prop.~6]{Ki}}.
\end{proof}

\begin{proposition}
$
\mu\Theta_p(\mu,v_k) \overset{s}{\rightarrow}1 \text{ in } C_\infty \text{ as }\mu \uparrow \infty \text{ uniformly in $k$}.
$
\end{proposition}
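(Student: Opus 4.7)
The plan is to mirror Proposition \ref{conv_prop} (the $L^p$-version of the claim) and to lift its conclusion from $L^p$ to $C_\infty$ using the Sobolev smoothing built into the factorization $\Theta_p(\mu,\sigma) = (\mu-\Delta)^{-\frac{1}{2}-\frac{1}{2q}}\Omega_p(\mu,\sigma,q,1)$.

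The first step is a density reduction. Condition $1^\circ$ combined with the preceding proposition (which identifies $\mu\Theta_p(\mu,v_k\mathcal{L}^d)|_{\mathcal{S}}$ with $\mu(\mu+\Lambda_{C_\infty}(v_k\mathcal{L}^d))^{-1}|_{\mathcal{S}}$) yields the uniform operator bound $\sup_{k}\sup_{\mu\ge\kappa_d\lambda_\delta}\|\mu\Theta_p(\mu,v_k\mathcal{L}^d)\|_{C_\infty\to C_\infty}\le 1$. A standard three-$\varepsilon$ argument therefore reduces the required strong convergence to showing that $\mu\Theta_p(\mu,v_k\mathcal{L}^d)f\to f$ in $C_\infty$, uniformly in $k$, for $f$ in a dense subspace of $C_\infty$; the natural choice is $\mathcal{S} \subset C^2\cap C_\infty = D(\Lambda_{C_\infty}(v_k\mathcal{L}^d))$.

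For $f \in \mathcal{S}$ the resolvent identity gives
\[
\mu\Theta_p(\mu,v_k\mathcal{L}^d)f - f \;=\; \Theta_p(\mu,v_k\mathcal{L}^d)\Delta f \;-\; \Theta_p(\mu,v_k\mathcal{L}^d)(v_k\cdot\nabla f),
\]
and the first summand has $C_\infty$-norm bounded by $\mu^{-1}\|\Delta f\|_\infty$ via $1^\circ$, hence vanishes uniformly in $k$ as $\mu\uparrow\infty$.

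The main obstacle is the drift piece $\Theta_p(\mu,v_k\mathcal{L}^d)(v_k\cdot\nabla f)$, since $\|v_k\|_p$ is not controlled as $k\uparrow\infty$, so $v_k\cdot\nabla f$ cannot be treated as an element of any fixed function space with uniform-in-$k$ norm. To sidestep this, I would substitute the alternative representation \eqref{alt_repr}: splitting $v_k\cdot\nabla f = v_k^{1/p}\cdot(|v_k|^{1/p'}\nabla f)$ absorbs one factor of $v_k$ into the uniformly bounded operator $T_p = v_k^{1/p}\cdot\nabla(\mu-\Delta)^{-1}|v_k|^{1/p'}$, while the residual $|v_k|^{1/p'}$ pairs with the factor $(\mu-\Delta)^{-\frac{1}{2q'}}$ inside $Q_p(q)$ to produce an operator bounded in $\mathcal{B}(L^p)$ uniformly in $k$, with an explicit $\mu^{-\alpha}$ decay ($\alpha>0$) coming from the surplus powers of $(\mu-\Delta)^{-1}$ acting on the Schwartz function $\nabla f$. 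Choosing $q$ just above $p$ so that $p(1+1/q)>d$ (possible since $p>d-1$ under the assumption $m_d\delta<\frac{2d-5}{(d-2)^2}$), the Sobolev embedding $\mathcal{W}^{1+1/q,p}\hookrightarrow C_\infty$ converts the resulting uniform $L^p$-bound into a $C_\infty$-bound of the form $C(f)\mu^{-\alpha}$, which vanishes uniformly in $k$. This is the same bookkeeping as in Proposition \ref{prop_first} and \cite[Prop.~3]{Ki}, and completes the argument.
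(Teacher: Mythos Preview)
Your outline is correct and follows exactly the line of \cite[Prop.~8]{Ki}, to which the paper's proof defers: reduce to $f\in\mathcal S$ via $1^\circ$, split $\mu\Theta_p f-f$ by the resolvent identity, and control the drift piece by rewriting it through \eqref{alt_repr} as $(\mu-\Delta)^{-\frac{1}{2}-\frac{1}{2q}}Q_p(q)(1+T_p)^{-1}\bigl(v_k^{1/p}\cdot\nabla f\bigr)$, with $\|v_k^{1/p}\cdot\nabla f\|_p$ bounded uniformly in $k$ via the form-bound. One small correction to your bookkeeping: the $\mu^{-\alpha}$ decay does not come from powers acting on $\nabla f$ (that input carries no $\mu$-dependence) but from $\|(\mu-\Delta)^{-\frac{1}{2}-\frac{1}{2q}}\|_{L^p\to C_\infty}\lesssim\mu^{-\frac{1}{2}(1+\frac{1}{q}-\frac{d}{p})}$, which is precisely the quantitative form of the Sobolev embedding you invoke.
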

\begin{proof}
The proof repeats the proof of {\cite[Prop.~8]{Ki}}.
\end{proof}

The last two proposition yield $2^{\circ}$).
This completes the proof of assertion (\textit{i}).

(\textit{ii}) follows from the equality $\Theta_p(\mu,\sigma)|_{\mathcal S}=(\mu+\Lambda_{C_\infty}(C_\infty))^{-1}|_{\mathcal S}$ (by construction), representation \eqref{sob_repr2}, and the Sobolev embedding theorem.

(\textit{iii})
It follows from (\textit{i}) that $e^{-t\Lambda_{C_\infty}(\sigma)}$ is positivity preserving.
The latter, $1^{\circ}$) and  the Riesz-Markov-Kakutani representation theorem imply (\textit{iii}).

\end{document}